\newtheorem{theorem}{Theorem}[section]
\newtheorem*{theorem*}{Theorem}
\newtheorem{lemma}[theorem]{Lemma}
\newtheorem{proposition}[theorem]{Proposition}
\newtheorem{remark}[theorem]{Remark}
\newtheorem{example}[theorem]{Example}
 \newtheorem{question}[theorem]{Question}
\newcommand{\nc}{\newcommand}
\nc{\cH}{{\mathcal H}}
\nc{\cA}{{\mathcal A}}
\nc{\cG}{{\mathcal G}}
\nc{\cC}{{\mathcal C}}
\nc{\cD}{{\mathcal D}}
\nc{\cO}{{\mathcal O}}
\nc{\cI}{{\mathcal I}}
\nc{\cB}{{\mathcal B}}
\nc{\cY}{{\mathcal Y}}
\nc{\cK}{{\mathcal K}} 
\nc{\cX}{{\mathcal X}}
\nc{\cS}{{\mathcal S}}
\nc{\cE}{{\mathcal E}}
\nc{\cF}{{\mathcal F}}
\nc{\cZ}{{\mathcal Z}}
\nc{\cQ}{{\mathcal Q}}
\nc{\cN}{{\mathcal N}}
\nc{\cP}{{\mathcal P}}
\nc{\cL}{{\mathcal L}}
\nc{\cM}{{\mathcal M}}
\nc{\cT}{{\mathcal T}}
\nc{\cW}{{\mathcal W}}
\nc{\cU}{{\mathcal U}}
\nc{\cJ}{{\mathcal J}}
\nc{\cV}{{\mathcal V}}
\nc{\cR}{{\mathcal R}}
\nc{\bH}{{\mathbb H}}
\nc{\bA}{{\mathbb A}}
\nc{\bG}{{\mathbb G}}
\nc{\bC}{{\mathbb C}}
\nc{\bO}{{\mathbb O}}
\nc{\bI}{{\mathbb I}}
\nc{\bB}{{\mathbb B}}
\nc{\bY}{{\mathbb Y}}
\nc{\bK}{{\mathbb K}} 
\nc{\bX}{{\mathbb X}}
\nc{\bS}{{\mathbb S}}
\nc{\bE}{{\mathbb E}}
\nc{\bF}{{\mathbb F}}
\nc{\bZ}{{\mathbb Z}}
\nc{\bQ}{{\mathbb Q}}
\nc{\bN}{{\mathbb N}}
\nc{\bP}{{\mathbb P}}
\nc{\bL}{{\mathbb L}}
\nc{\bM}{{\mathbb M}}
\nc{\bT}{{\mathbb T}}
\nc{\bW}{{\mathbb W}}
\nc{\bU}{{\mathbb U}}
\nc{\bD}{{\mathbb D}}
\nc{\bJ}{{\mathbb J}}
\nc{\bV}{{\mathbb V}}
\nc{\bR}{{\mathbb R}}
\nc{\OO}{\mathcal{O}}
\nc{\PP}{\mathbb{P}}
\DeclareMathOperator{\id}{id}
\DeclareMathOperator{\Tr}{Tr}
\DeclareMathOperator{\Nm}{Nm}
\DeclareMathOperator{\Hom}{Hom}
\DeclareMathOperator{\Pic}{Pic}
\DeclareMathOperator{\Sim}{Sym}
\DeclareMathOperator{\Sym}{Sym}
\DeclareMathOperator{\TorM}{\tau}
\DeclareMathOperator{\PrymM}{Pr}
\DeclareMathOperator{\SpinM}{s\tau}
\nc{\fA}{{\mathfrak{A}}}
\nc{\fB}{{\mathfrak{B}}}
\nc{\fC}{{\mathfrak{C}}}
\nc{\fD}{{\mathfrak{D}}}
\nc{\fE}{{\mathfrak{E}}}
\nc{\fF}{{\mathfrak{F}}}
\nc{\p}{\partial}
\nc{\ph}{\hat{\partial}}
\nc{\war}{{\color{red} CHECK}}
\nc{\rtext}[1]{{\color{red}{#1}}}
\author[G. Codogni]{Giulio Codogni}
\address[G. Codogni]{Dipartimento di Matematica, Università di Roma Tor Vergata, Via della Ricerca Scientifica 1, 00133 Roma, Italy}
\email{codogni@mat.uniroma2.it}
\author[V. Gonz\'alez-Alonso]{V\'ictor Gonz\'alez-Alonso}
\address[V. Gonz\'alez-Alonso]{Institut f\"ur Algebraische Geometrie, 
    Leibniz Universit\"at Hannover, 
    Welfengarten 1, 30167 Hannover, Germany}
\email{gonzalez@math.uni-hannover.de}
\author[S. Torelli]{Sara Torelli}
\address[S. Torelli]{Institut f\"ur Algebraische Geometrie, 
    Leibniz Universit\"at Hannover, 
    Welfengarten 1, 30167 Hannover, Germany}
\email{torelli@math.uni-hannover.de}
\subjclass{14H10,32G20}
\keywords{Moduli spaces, rigidity of period maps, Fujita decomposition, curves, abelian varieties}
\title{Rigidity of modular morphisms via Fujita decomposition}
 \thanks{GC would like to thank Julius Ross, who nicely raised the question about the rigidity of the Torelli map about ten years ago. We hope he will enjoy this late answer. We thank Benson Farb for private communications. We also thank the
referee and the editors for their very valuable comments. GC is partially supported by  the MIUR Excellence Department Project MatMod@TOV, CUP E83C23000330006, awarded to the Department of Mathematics, University of Rome Tor Vergata, and he is a member of the GNSAGA group of INdAM. ST is supported by the Alexander von Humboldt Foundation.}
\begin{document}

\begin{abstract}
    We prove that the Torelli, Prym  and Spin-Torelli morphisms, as well as covering maps between moduli stacks of smooth projective curves cannot be deformed. The proofs use properties of the Fujita decomposition of the Hodge bundle of families of curves.
\end{abstract}

\maketitle

\section{introduction}\label{sec:intro}


Let $\mathcal{M}$ and $\mathcal{A}$ be Deligne-Mumford stacks. A non-constant morphism $\mathcal{M}\to \mathcal{A}$ is {\em globally rigid} if it is the unique non-constant morphism between $\mathcal{M}$ and $\mathcal{A}$, {\em locally rigid} if it does not admit non-trivial local deformations with fixed target and domain, and {\em infinitesimally rigid} if it does not admit non-trivial first-order deformations with fixed target and domain. In particular, global and infinitesimal rigidity both imply local rigidity. However, they do not imply each other, as certain first-order deformations may not extend to local deformations, and a discrete set of morphisms may all have no first-order deformations. From the point of view of the corresponding moduli stack of non-constant morphisms with fixed target and domain, global rigidity forces the moduli to be just one point, while infinitesimal rigidity determines whether the point is reduced.

\medskip


In Section \ref{sec:proof1}, we prove the following.

\begin{theorem}\label{thm:main1} For any $g\geq 3$ the Torelli morphism $\TorM:\cM_g\to \cA_g$, from the moduli stack of genus $g$ smooth projective curves  $\mathcal{M}_g$ to the moduli stack of principally polarised abelian $g$-folds, is infinitesimally rigid. \end{theorem}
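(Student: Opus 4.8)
The plan is to compute the space of first-order deformations of $\TorM$ directly and show it is forced to vanish by the positivity of the Hodge bundle. A first-order deformation of a morphism with fixed source and target is a global section of the pullback of the target tangent sheaf, so I would first identify $H^0(\cM_g,\TorM^*T_{\cA_g})$. Writing $\bE$ for the Hodge bundle on $\cM_g$, with fibre $H^0(C,\omega_C)$ at $[C]$, and using the standard identification $T_{[A]}\cA_g\cong\Sym^2 H^0(A,\Omega^1_A)^*$, the compatibility of the Hodge bundles under $\TorM$ yields a canonical isomorphism $\TorM^*T_{\cA_g}\cong\Sym^2\bE^*$. The only potentially trivial deformations arise from reparametrising by automorphisms of the source or target; since for $g\ge 3$ neither $\cM_g$ nor $\cA_g$ carries nonzero global holomorphic vector fields, these contribute nothing, and infinitesimal rigidity reduces to the single vanishing statement $H^0(\cM_g,\Sym^2\bE^*)=0$.

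To establish this vanishing I would test a hypothetical section $\sigma$ against complete families of curves. Given a nonconstant map $B\to\cM_g$ from a smooth complete curve, classified by a family $f\colon X\to B$, the restriction $\bE|_B$ is $f_*\omega_{X/B}$, which the Fujita decomposition writes as $f_*\omega_{X/B}=\cU\oplus\cP$ with $\cU$ unitary flat and $\cP$ nef with no numerically flat quotient. Expanding $\Sym^2(f_*\omega_{X/B})^*=\Sym^2\cU^*\oplus(\cU^*\otimes\cP^*)\oplus\Sym^2\cP^*$, the two summands involving $\cP^*$ admit no nonzero sections: both $\Sym^2\cP$ and $\cU\otimes\cP$ are nef with no numerically flat quotient, so their duals are globally section-free on $B$. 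Hence $\sigma|_B$ lands in $H^0(B,\Sym^2\cU^*)$, which, since $\cU$ is flat, is the space of monodromy-invariant quadratic forms on the underlying local system.

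The decisive step is then to kill these invariant forms using the largeness of the monodromy. The flat summand $\cU$ comes from a unitary local subsystem of $R^1f_*\bC$; for the universal family the monodromy is the full image of the mapping class group in $\mathrm{Sp}(2g,\bZ)$, which is Zariski dense in $\mathrm{Sp}(2g,\bC)$, so the standard local system is irreducible and has no proper nonzero local subsystem. A nonconstant family cannot be unitary, so $\cU=0$ whenever $B$ has sufficiently large monodromy, and then $\Sym^2\cU^*=0$ forces $\sigma|_B=0$. Letting $B$ range over a family of complete curves covering a general point of $\cM_g$ gives $\sigma=0$ on a dense open substack, and hence $\sigma=0$ everywhere by torsion-freeness of $\Sym^2\bE^*$.

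The hard part will be making the passage to complete families rigorous on the non-proper $\cM_g$: one must either exhibit enough complete curves through a general point whose restricted monodromy remains Zariski dense in $\mathrm{Sp}(2g,\bC)$, or --- more robustly --- replace the family-by-family argument by a single global statement, extending $\bE$ to the nef Deligne extension $\overline{\bE}$ on a compactification $\overline{\cM}_g$ and controlling the growth of $\sigma$ in the Hodge metric near the boundary so that the nefness estimates apply directly. The hyperelliptic locus, where $d\TorM$ drops rank and $\TorM$ ramifies, does not obstruct the core vanishing, since the identification $\TorM^*T_{\cA_g}\cong\Sym^2\bE^*$ and the positivity argument hold over all of $\cM_g$; I expect the genuine difficulty to be entirely concentrated in the analytic control of sections over the non-compact base.
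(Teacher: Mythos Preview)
Your overall architecture matches the paper's exactly: identify first-order deformations with $H^0(\cM_g,\Sym^2\bE^*)$, restrict to complete curves $B\subset\cM_g$, and use the Fujita decomposition of $f_*\omega_{X/B}$ to kill sections. The difference lies entirely in how you propose to force $\cU=0$, and this is precisely where the paper is simpler than you expect.

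You aim to show $\cU=0$ by arranging the monodromy of $B$ to be Zariski dense in $\mathrm{Sp}(2g,\bC)$, and you correctly flag this as the hard part. The paper sidesteps monodromy completely. It uses the inclusion $\cU\subseteq\cK:=\ker\theta$, where $\theta$ is the relative IVHS map, together with the elementary fact that for a \emph{general} point $[C]\in\cM_g$ and a \emph{general} tangent direction $\xi\in H^1(C,T_C)$, cup-product with $\xi$ gives an isomorphism $H^0(C,\omega_C)\to H^1(C,\cO_C)$ (this is a standard max-rank statement, e.g.\ Lee--Pirola). Hence $\cK_b=0$ at that point, so $\cU=0$ for any complete $B$ passing through $[C]$ tangent to $\xi$. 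The existence of such $B$ is guaranteed because $M_g^0$ sits inside the Satake compactification with boundary of codimension $\ge 2$, so complete curves through any given point with any prescribed tangent direction exist. This simultaneously resolves your ``hard part'': no control on monodromy, no Deligne extension, no Hodge-metric growth estimates are needed.

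Once $\cU=0$, the paper's Fujita decomposition gives $f_*\omega_{X/B}=\cA$ ample, and $H^0(B,\Sym^2\cA^\vee)=0$ is immediate. Your finer splitting of $\Sym^2$ into three pieces, and the nef-with-no-flat-quotient argument for two of them, is correct but unnecessary once $\cU$ is gone. A small side remark: your sentence about quotienting by reparametrisations of source and target is superfluous---first-order deformations with fixed source and target are already exactly $H^0(\cM_g,\TorM^*T_{\cA_g})$, with nothing to subtract.
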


In \cite{Far21}, Farb can prove the global rigidity, i.e. uniqueness, for the Torelli morphism for $g\geq 3$ in the category of complex orbifolds (as in \cite{Far21}*{Remark 2.1}). Following his proof one obtains also uniqueness in the category of stacks. Uniqueness as a map between coarse moduli spaces is  still open.

 Let us recall that the infinitesimal rigidity of $\mathcal{M}_g$ is still unknown. To the best of our knowledge, the most recent work in this direction is by Hacking \cite{Hac08}: he proves infinitesimal rigidity for the moduli stack $\overline{\mathcal{M}}_{g,n}$ of stable curves of arithmetic genus $g$ with $n$ marked points (a result that has been extended to positive characteristic in \cite{FM17}), and leaves the infinitesimal rigidity of $\mathcal{M}_{g,n}$ as an open question.

The rigidity of $\mathcal{A}_g$ is also open. To best of our knowledge, just the following is known. Let $\mathcal{A}'_g$ be a finite cover of $\mathcal{A}_g$ constructed as the moduli space of ppav with a level srtucture, we choose a level structure such that there is no difference between the coarse moduli space and the stack. Let $\overline{\mathcal{A}'}_g$ be a good toroidal compactification of $\mathcal{A}'_g$. Then, building on the classical work of Calabi and Vesentini \cite{CV}, in \cite{Pet18}*{Theorem 4.3} it is shown that the pair $(\overline{\mathcal{A}'}_g, \partial \overline{\mathcal{A}'}_g) $ is rigid. From an arithmetic point of view, similar rigidity results are proven in \cite{Fal84}.


 \medskip
 
Our second result concerns infinitesimal rigidity of the Prym morphism $\PrymM:\cR_g\to \cA_{g-1}$, from the moduli stack $\cR_g$ of pairs $(C,\eta)$ where $C$ is a smooth projective curve of genus $g$ and $\eta\in J(C)$ is a non-trivial line bundle on $C$ with $\eta^{\otimes 2}\cong\cO_C$.  This morphism maps each pair $(C,\eta)$ to its Prym variety $\Pr(C,\eta)$ (see Section \ref{sec:proof2} for some details). It is never an immersion but it is generically injective for $g\geq 7$, namely as soon as the dimension of the target is larger than the dimension of the domain (see e.g. \cite{Don92}).


As for $\mathcal{M}_g$,  any rigidity for $\cR_g$ is still unknown. However, the global rigidity of the Prym morphism is established in \cite{Ser22} and answers a question posed in \cite{Far21}. Again, our result of infinitesimal rigidity combined with the previous result on global rigidity provides a complete answer to the problem of rigidity with fixed target and domain. 
The following theorem is proven in Section \ref{sec:proof2}. 
 
\begin{theorem}\label{thm:main2} For any $g\geq 3$ the Prym morphism $\PrymM:\cR_g\to \cA_{g-1}$ does not admit any non-trivial first-order deformation with fixed domain and target. 
\end{theorem}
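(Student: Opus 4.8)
The plan is to reduce infinitesimal rigidity of the Prym morphism to a statement about the Fujita decomposition of the Hodge bundle, exactly parallel to the strategy used for the Torelli morphism in Theorem~\ref{thm:main1}. A first-order deformation of $\PrymM$ with fixed domain $\cR_g$ and fixed target $\cA_{g-1}$ is classified by a class in a suitable cohomology group built from the normal bundle / the pullback of the tangent sheaf of $\cA_{g-1}$; concretely, it corresponds to a global section of the sheaf $\Hom(\PrymM^*T\cA_{g-1}, \PrymM^*T\cA_{g-1})$-type object, or more precisely to a section of the pullback of the tangent bundle of $\cA_{g-1}$ along $\PrymM$ modulo the image of the differential $d\PrymM$. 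The first step is therefore to write down this deformation space explicitly. Since $T_{[A]}\cA_{g-1}\cong \Sym^2 H^0(A,\Omega^1_A)^\vee$, and the Prym variety $\Pr(C,\eta)$ has $H^0(\Pr,\Omega^1)\cong H^0(C,\omega_C\otimes\eta)=H^0(C,\omega_C)^-$ (the anti-invariant part of the pullback to the \'etale double cover $\tilde C\to C$), I would identify the relevant deformation space with global sections over $\cR_g$ of a bundle constructed from $\Sym^2$ of the anti-invariant Hodge subbundle.

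First I would set up the universal family of Prym-covers $\tilde C\to C$ over $\cR_g$ and decompose the Hodge bundle of the universal $\tilde C$ into its $+$ and $-$ eigenbundles for the involution, identifying the Prym-Hodge bundle with the $-$-part $\mathbb E^-$. The key input, supplied by the Fujita decomposition as used for Theorem~\ref{thm:main1}, is that over any complete curve $B\to\cR_g$ the Hodge bundle $\mathbb E$ (and hence its eigen-summands) splits off a flat unitary piece and an ample piece, and that the only flat sub-local-systems are the obvious ones coming from the monodromy representation. Concretely, I would argue that a first-order deformation of $\PrymM$ produces, over every such test curve $B$, a morphism of the associated variation of Hodge structure that must respect the Fujita/flat decomposition; the rigidity of the unitary summand (no non-trivial flat deformations) then forces the deformation class to vanish on the flat part. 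The second step is to show the deformation also vanishes on the ample part, using positivity: a non-trivial infinitesimal deformation would give a non-zero map out of an ample bundle into a bundle with no positivity, which is impossible on a complete base, and $\cR_g$ carries enough complete curves to detect this.

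The main obstacle is that the Prym morphism is \emph{never} an immersion, so unlike the Torelli case one cannot simply invert $d\PrymM$ and must control the kernel of the differential. The differential $d\PrymM$ at $(C,\eta)$ is the multiplication/cup-product map $\Sym^2 H^0(C,\omega_C\otimes\eta)\to H^0(C,\omega_C^{\otimes 2})$ dual to the codifferential, whose kernel (the ``Prym-Gaussian'' kernel) is generically nonzero. I would therefore need a careful analysis showing that any would-be first-order deformation lands in the image of $d\PrymM$ modulo this kernel, i.e. that the obstruction to extending is concentrated precisely where the Fujita positivity argument applies; the delicate point is that the non-immersion locus could a priori allow extra deformations tangent to the kernel directions. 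To handle this I expect to use the explicit description of the Prym-canonical system together with the fact that the anti-invariant multiplication map is surjective onto its image for $g\geq 3$, so that the kernel contributes no flat sub-bundle over complete test curves, and then conclude by combining the vanishing on both the flat and the ample parts that the original deformation class is zero. The genus bound $g\geq 3$ should enter exactly at the point where one needs the Prym-canonical map to be sufficiently non-degenerate for the positivity argument to bite.
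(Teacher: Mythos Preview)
Your proposal contains two substantial confusions that derail the argument.

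First, the deformation space is misidentified. By Lemma~\ref{lem:def}, first-order deformations of $\PrymM$ with fixed domain and target are classified exactly by $H^0(\cR_g,\PrymM^*\cT_{\cA_{g-1}})$, \emph{not} by this group modulo the image of $d\PrymM$. There is no quotient, no normal bundle, and no role for the cokernel of the differential. Consequently your entire ``main obstacle'' paragraph is a red herring: whether $\PrymM$ is an immersion, and the structure of the Prym--Gaussian kernel of the multiplication map $\Sym^2 H^0(C,\omega_C\otimes\eta)\to H^0(C,\omega_C^{\otimes 2})$, are simply irrelevant to the vanishing of $H^0(\cR_g,\PrymM^*\cT_{\cA_{g-1}})$. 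The non-immersion of $\PrymM$ creates no extra difficulty whatsoever compared to the Torelli case.

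Second, your treatment of the Fujita decomposition is over-elaborate and incomplete. You propose to argue separately on the flat and ample summands, invoking an unspecified ``rigidity of the unitary summand''; but you never explain why a section of $\Sym^2(\cU\oplus\cA)^\vee$ should respect the splitting, nor why the $\Sym^2\cU^\vee$ contribution (which can genuinely have sections, since $\cU$ is flat) must vanish. The paper bypasses this entirely: one restricts to a \emph{general} complete curve $B$ in $R_g^0$ (obtained by lifting a general complete curve in $M_g^0$ via the \'etale cover $\cR_g\to\cM_g$) and shows, by adapting Lemma~\ref{lem:KU}, that for general $\xi\in H^1(C,T_C)$ both eigencomponents of the infinitesimal variation of Hodge structure on the double cover $C'$ are isomorphisms (this is \cite{LP16}*{Lemma 2.4}). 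Hence $\cU=0$ for the Hodge bundle $E'=f'_*\Omega^1_{\cC'/B}$ of the family of double covers over such $B$, so $E'$ itself is ample. The anti-invariant summand $(R^1f_*\cL)^\vee$ is a quotient of $E'$, hence ample, and Lemma~\ref{lem:no-sections} gives $H^0(B,\Sym^2 R^1f_*\cL)=0$ directly.

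Finally, the bound $g\geq 3$ enters only because one needs complete curves through a general point of $\cM_g$ (via the Satake compactification), not through any non-degeneracy property of the Prym-canonical map.
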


Our third result concerns the infinitesimal rigidity of the Spin-Torelli morphism $\SpinM:\cS_g\to \cN_g$, from the moduli stack $\cS_g$ of spin curves of genus $g$ to the moduli stack $\cN_g$ of pairs $(A,\Theta)$ of an abelian variety together with an effective symmetric divisor with $h^0(\cO_A(\Theta))=1$. Closed points in $\cS_g$ are pairs $(C,\vartheta)$ of a smooth projective curve $C$ of genus $g$ together with a theta-characteristic $\vartheta$ (i.e. a line bundle such that $\vartheta^{\otimes 2}\cong\omega_C$). The theta-characteristic allows to construct a unique \emph{symmetric} divisor $\Theta\subseteq J(C)$ and therefore a unique closed point in $\cN_g$ (see Section \ref{sec:proof3} for a more detailed description). This construction defines an injective morphism, the Spin-Torelli morphism. We are not aware of any result of rigidity regarding the moduli stacks $\cS_g$ 
or the Spin-Torelli morphism; the state of the art about the rigidity of $\cN_g$ is analogous to the case of $\cA_g$.

The proof of the following theorem is contained in Section \ref{sec:proof3}.

\begin{theorem}\label{thm:main3} For any $g\geq 3$ the Spin-Torelli morphism $\SpinM:\cS_g\to \cN_g$ does not admit any non-trivial first-order deformation with fixed domain and target. 
\end{theorem}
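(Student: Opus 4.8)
The plan is to show that every first-order deformation of $\SpinM$ with fixed source and target is trivial, by identifying the deformation space with the global sections of a symmetric power of the Hodge bundle and adapting the computation behind Theorem \ref{thm:main1}. The starting observation is that the Spin--Torelli situation is pulled back from the Torelli situation along finite étale covers. Indeed, forgetting the divisor and keeping only the polarised abelian variety $(A,\cO_A(\Theta))$ exhibits $\cN_g$ as a finite étale cover of $\cA_g$: the symmetric divisors representing a fixed principal polarisation form a torsor under $A[2]$, and the condition $h^0(\cO_A(\Theta))=1$ makes $\Theta$ rigid in its linear system, so the choice of $\Theta$ is discrete and $T_{\cN_g}$ is the pull-back of $T_{\cA_g}=\Sym^2\bE^\vee$, with $\bE$ the Hodge bundle. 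Similarly the forgetful map $p\colon\cS_g\to\cM_g$, $(C,\vartheta)\mapsto C$, is finite étale of degree $2^{2g}$, and $\SpinM$ lies over $\TorM$. Hence $d\SpinM$ is the pull-back of $d\TorM$ (on cotangent spaces the multiplication map $\Sym^2 H^0(C,\omega_C)\to H^0(C,\omega_C^{\otimes 2})$) and $\SpinM^*T_{\cN_g}\cong p^*\Sym^2\bE^\vee$. The problem reduces to computing $H^0(\cS_g,p^*\Sym^2\bE^\vee)$.

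Next I would transport the computation to $\cM_g$. As $p$ is finite étale, the projection formula gives $p_*p^*\Sym^2\bE^\vee\cong\Sym^2\bE^\vee\otimes p_*\cO_{\cS_g}$, where $p_*\cO_{\cS_g}$ is the unitary flat bundle underlying the permutation representation of $\pi_1(\cM_g)$ on the $2^{2g}$ theta-characteristics. Decomposing it into flat summands $L_\chi$ yields
\[
H^0(\cS_g,\SpinM^*T_{\cN_g})\cong\bigoplus_\chi H^0\bigl(\cM_g,\Sym^2\bE^\vee\otimes L_\chi\bigr).
\]
The summand for the trivial character is $H^0(\cM_g,\Sym^2\bE^\vee)=H^0(\cM_g,\TorM^*T_{\cA_g})$, i.e. the Torelli deformation space, which by Theorem \ref{thm:main1} contains only trivial deformations; these pull back to trivial deformations of $\SpinM$. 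It therefore suffices to prove that $H^0(\cM_g,\Sym^2\bE^\vee\otimes L_\chi)=0$ for every nontrivial $\chi$.

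The core step is to re-run, in the presence of the flat twist $L_\chi$, the Fujita argument of Section \ref{sec:proof1}. Passing to a compactification and applying the (second) Fujita decomposition to the Hodge bundle of the universal family, one writes $\bE=\cF\oplus\cU$ with $\cU$ unitary flat and $\cF$ the positive part. The monodromy of a family of spin curves is the image of the spin mapping class group, a finite-index subgroup of the mapping class group and hence of $\mathrm{Sp}_{2g}(\bZ)$; by Borel density it is Zariski dense in $\mathrm{Sp}_{2g}$, so the weight-one local system $R^1\!f_*\bC$ stays irreducible and is not unitary. Consequently it has no nonzero unitary flat sub-local-system, forcing $\cU=0$, and thus $\Sym^2\bE^\vee=\Sym^2\cF^\vee$ is built entirely out of the positive part. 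Tensoring by the unitary flat $L_\chi$ preserves the negativity of $\Sym^2\cF^\vee$, so $H^0(\cM_g,\Sym^2\bE^\vee\otimes L_\chi)=0$. Combined with the previous step, every first-order deformation of $\SpinM$ is trivial, which is the assertion of the theorem. (Alternatively, since the restriction of $\SpinM^*T_{\cN_g}$ to any family of spin curves over a projective base is the untwisted bundle $\Sym^2\bE^\vee$ of the underlying curves, one may apply the argument of Theorem \ref{thm:main1} verbatim once the spin monodromy is known to be Zariski dense.)

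I expect the main difficulty to lie in the interaction between positivity and flatness, together with the attendant boundary analysis. Making rigorous that $\Sym^2\cF^\vee\otimes L_\chi$ has no global sections requires the Fujita decomposition on a suitable compactification, controlling the canonical (Deligne) extension of $\bE$ and the extensions of the flat bundles $L_\chi$, so that the positivity of $\cF$ genuinely survives a unitary flat twist. The conceptual input that unlocks this --- that the spin mapping class group retains big (Zariski-dense) monodromy, whence $\cU=0$ --- is precisely where the geometry of spin curves enters, and is the point I would establish with most care.
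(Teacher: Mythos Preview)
Your proposal reaches the correct conclusion but takes a substantially longer route than the paper. Once you have identified $\SpinM^*\cT_{\cN_g}\cong\pi^*\tau^*\cT_{\cA_g}$ via the commutative square of \'etale covers (which you do correctly), the paper simply restricts to curves, exactly as in the proof of Theorem~\ref{thm:main1}: choose a smooth projective curve $B'\subseteq M_g^0$ for which the Hodge bundle $E'$ is ample (such curves exist by the argument in Section~\ref{sec:proof1}, via Lemma~\ref{lem:KU}), and let $B$ be any connected component of $\pi^{-1}(B')\subseteq\cS_g$. The family of curves over $B$ is the pull-back of that over $B'$ along the finite \'etale map $\varphi=\pi_{|B}$, so its Hodge bundle is $E=\varphi^*E'$, which is again ample. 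Then $(\SpinM^*\cT_{\cN_g})_{|B}\cong\Sym^2E^{\vee}$ has no sections by Lemma~\ref{lem:no-sections}, and such $B$ cover a dense open set of $\cS_g$. No push-forward to $\cM_g$, no decomposition of $p_*\cO_{\cS_g}$, no boundary analysis, and no global Fujita decomposition are needed.

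In particular, your parenthetical ``alternative'' is essentially the paper's argument, except that Zariski density of the spin monodromy is a red herring: one does not need to prove $\cU=0$ for a \emph{general} curve in $\cS_g$ via monodromy, only to observe that curves pulled back from $M_g^0$ already have ample Hodge bundle, since ampleness is preserved under pull-back by finite maps of curves. Your main line --- pushing forward and treating each flat summand $L_\chi$ separately --- can be made to work (the key inequality $H^0(B,\Sym^2\cA^{\vee}\otimes L)=0$ for $\cA$ ample and $L$ unitary flat on a curve follows from the same quotient-degree argument as Lemma~\ref{lem:no-sections}), but it purchases nothing here and introduces exactly the compactification and positivity-versus-flatness issues you flag at the end, all of which the paper's shortcut sidesteps.
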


We now focus on morphisms between moduli stacks of curves. By a result of Royden \cite{Roy71}, the only automorphism $\mathcal{M}_g\to \mathcal{M}_g$ is the identity. In \cite{Mas14}, building on \cite{GKM02}, it is shown that the automorphism group of $\overline{\mathcal{M}}_{g,n}$ is the symmetric group acting on the marked points, except for some low genera cases explicitely described in loc. cit. These problems are also reviewed in \cite{Far09}*{Question 4.6}.

Our next and last result regards infinitesimal rigidity of certain morphisms from $\mathcal{M}_g$ to another moduli stack of curves $\mathcal{M}_h$ of some genus $h\geq g$ constructed as follows.

Let $X_g$ (resp. $X_h$) be a closed orientable real surface of genus $g$ (resp. $h$). An unramified finite covering $p: X_h\to X_g$ gives a map  $p^*\colon \mathcal{T}_g\to \mathcal{T}_h$ between the corresponding Teichm\"{u}ller spaces by pulling back the complex structures. The cover $p$ is called {\em characteristic} if  $p_*(\pi_1(X_h))$ is a characteristic subgroup of $\pi_1(X_g)$, i.e. $p_*(\pi_1(X_h))$ is left invariant by $ \mbox{Aut}(\pi_1(X_g))$. Topologically, these are coverings such that every homeomorphism of $X_g$ lifts to a homeomorphism of $X_h$, and the lifting process defines a homomorphism $L_p : \mbox{Aut}(\pi_1(X_g)) \to \mbox{Aut}(\pi_1(X_h))$. Because of this, the map $p^*\colon \mathcal{T}_g\to \mathcal{T}_h$ defined by a characteristic cover descends to a morphism $p^*: \mathcal{M}_g\to \mathcal{M}_{h}$ (see \cite{BN97}*{III.1 and III.2} for more details).

The study of the global rigidity of these morphisms is stated as an open question in \cite{Far23} (see Question 4.5, attributed to C. McMullen). All these problems extend to the Deligne-Mumford compactification of $\mathcal{M}_g$ given the studies on the augmented Teichmüller space (see \cite{BN97} and \cite{HMQ21} for details).

Our contribution is to prove infinitesimal rigidity of all the morphisms $p^*:\mathcal{M}_g\to \mathcal{M}_{h}$ induced by characteristic covers $p\colon X_h\to X_g$.

\begin{theorem} \label{thm:main4}
For any $g\geq 3$, the morphism $p^*\colon\cM_g\to\cM_h$ does not admit any non-trivial first-order deformation with fixed domain and target.
\end{theorem}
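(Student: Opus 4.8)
The plan is to prove that the space of first--order deformations of $p^*$ with fixed source and target, namely $H^0\bigl(\cM_g,(p^*)^*T_{\cM_h}\bigr)$, vanishes. Since $p^*$ is an immersion---its differential is dual to the fibrewise surjective trace map $H^0(\tilde C,\omega_{\tilde C}^{\otimes 2})\to H^0(C,\omega_C^{\otimes 2})$---any such deformation is a global section of $(p^*)^*T_{\cM_h}$, and those coming from infinitesimal automorphisms of source or target already vanish because $H^0(\cM_g,T_{\cM_g})=H^0(\cM_h,T_{\cM_h})=0$ by Royden \cite{Roy71}. The characteristic cover $p$ induces a family of \'etale covers $\pi\colon\tilde C\to C$ over $\cM_g$; as $\omega_{\tilde C}=\pi^*\omega_C$, the splitting $\pi_*\cO_{\tilde C}=\cO_C\oplus\cV$ into the trivial summand and a flat unitary bundle $\cV$ with $h^0(\cV)=0$ induces a decomposition of the pulled--back genus--$h$ Hodge bundle $\widetilde{\bE}:=(p^*)^*\bE_h=\bE\oplus\bE'$, with $\bE$ the genus--$g$ Hodge bundle and $\bE'$ the Prym summand of fibre $H^0(C,\omega_C\otimes\cV)$.

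Composing with the genus--$h$ Torelli morphism, whose differential embeds $T_{\cM_h}$ into $\TorM_h^*T_{\cA_h}$ wherever $\tilde C$ is non--hyperelliptic (a dense open of the image), produces an injection of sheaves $(p^*)^*T_{\cM_h}\hookrightarrow\Sym^2\widetilde{\bE}^\vee=\Sym^2(\bE\oplus\bE')^\vee$, hence an injection on global sections; it thus suffices to show $H^0\bigl(\cM_g,\Sym^2(\bE\oplus\bE')^\vee\bigr)=0$. Here the Fujita decomposition enters exactly as in the proofs of Theorems \ref{thm:main1}--\ref{thm:main3}: restricting to a complete curve $B\subset\cM_g$ (which exist for $g\ge 3$) and writing the second Fujita decomposition $\widetilde{\bE}|_B=\tilde\cU\oplus\tilde\cA$ with $\tilde\cU$ flat unitary and $\tilde\cA$ having only positive--degree quotients, any symmetric form $\Sym^2\widetilde{\bE}|_B\to\cO_B$ must kill the summands $\tilde\cU\otimes\tilde\cA$ and $\Sym^2\tilde\cA$, since $\Hom(\tilde\cU\otimes\tilde\cA,\cO_B)=\Hom(\Sym^2\tilde\cA,\cO_B)=0$ by positivity. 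The form is therefore supported on $\Sym^2\tilde\cU$, where the Higgs field vanishes, and the mechanism used for the earlier theorems promotes such a holomorphic section to one that is parallel for the Gauss--Manin connection, i.e. to a $\pi_1(\cM_g)$--invariant symmetric bilinear form on $H^1(\tilde C,\bC)$.

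It remains to prove that no nonzero $\pi_1(\cM_g)$--invariant symmetric bilinear form exists on $H^1(\tilde C,\bC)$, and this monodromy statement is the main obstacle. Because $p$ is characteristic, $\pi_1(\cM_g)=\mathrm{Mod}_g$ acts through the lifting homomorphism $L_p$, preserving the decomposition $H^1(\tilde C,\bC)=H^1(C,\bC)\oplus P$ with $P=H^1(C,\cV)$ the Prym part, and commuting with the deck--group action that splits $P$ further into isotypic blocks. On $H^1(C,\bC)$ the representation is the standard representation of $\mathrm{Sp}(2g,\bZ)$, whose symmetric square is irreducible and nontrivial, so it carries no invariant symmetric form (the polarization gives only an alternating one); and the cross term $\Hom_{\mathrm{Mod}_g}\bigl(H^1(C,\bC),P\bigr)$ vanishes once one knows $P$ shares no constituent with the standard representation. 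The heart of the matter is then to show that each block of $P$ is irreducible and of symplectic or unitary type---so that its only invariant bilinear form is alternating or Hermitian and $\Sym^2 P^\vee$ has no invariants---which amounts to controlling the image of the homological (higher Prym) representation of $\mathrm{Mod}_g$ attached to the characteristic cover and proving it is large enough to exclude an invariant symmetric form. Granting this, $H^0\bigl(\cM_g,\Sym^2(\bE\oplus\bE')^\vee\bigr)=0$, so $(p^*)^*T_{\cM_h}$ has no nonzero global section and $p^*$ is infinitesimally rigid.
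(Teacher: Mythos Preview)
Your approach takes a substantially harder route than the paper's, and it does not close. The paper observes directly that over any complete curve $B\subset\cM_g$ one has
\[
(p^*)^*\cT_{\cM_h}\big|_B \;\cong\; \bigl(\pi'_*\omega_{\cC'/B}^{\otimes 2}\bigr)^{\vee},
\]
where $\pi'\colon\cC'\to B$ is the induced genus--$h$ family, and then invokes Esnault--Viehweg \cite{EV90}*{Theorem 3.1} to conclude that $\pi'_*\omega_{\cC'/B}^{\otimes 2}$ is ample for \emph{any} non--isotrivial family. Lemma \ref{lem:no-sections} then kills the sections. No Fujita decomposition, no monodromy, no generality assumption on $B$ is needed: the bicanonical push--forward is always ample, unlike the Hodge bundle.

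By contrast, you embed $(p^*)^*\cT_{\cM_h}$ into $\Sym^2\widetilde{\bE}^\vee$ via the Torelli differential and attempt to show the larger space has no sections. This is strictly stronger than what is required, and it is precisely the vanishing the paper flags as \emph{open} in the paragraph following the proof of Theorem \ref{thm:main4}: the analogue of Lemma \ref{lem:KU} fails here because the family $\cC'\to B$ is constrained (only deformations of $\tilde C$ compatible with the cover occur), so one cannot arrange the unitary summand $\tilde\cU$ of $\widetilde{\bE}|_B$ to vanish. You correctly identify this and try to compensate with a monodromy argument, but that argument is where the genuine gap lies: the claim that the higher Prym representation of $\mathrm{Mod}_g$ on $P=H^1(\tilde C,\bC)/H^1(C,\bC)$ admits no invariant symmetric bilinear form is not established in your proposal (``Granting this\ldots'') and is, for general characteristic covers, a difficult statement in the vicinity of the Putman--Wieland circle of problems. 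Without it, the proof is incomplete. The moral is that passing through $\cA_h$ throws away exactly the extra positivity (of the \emph{bicanonical} bundle) that makes this theorem accessible.
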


It is natural to ask whether the compositions $\cM_g\to\cA_h$ of the morphisms $p^*\colon\cM_g\to\cM_h$ with the Torelli morphisms $\tau\colon\cM_h\to\cA_h$ are also rigid. The question on global rigidity is raised in \cite{Far23}. With our techniques, we cannot solve the corresponding question on infinitesimal rigidity for the moment. The main obstacle is discussed after the proof of Theorem \ref{thm:main4}.

\medskip
Let us stress that our results are for morphisms between stacks. The situation for the induced morphism between coarse spaces is discussed in Section \ref{sec:coarse}.


\medskip

Let us briefly explain the structure of the proofs. We first study the space of sections of the pullback of the tangent bundle of the target via the investigated morphism. For both smooth Deligne-Mumford stacks and normal varieties, the vanishing of all these sections suffices to conclude infinitesimal rigidity (see Lemma \ref{lem:def}). To prove this vanishing, we study those sections restricted to a complete curve $B$ in $\mathcal{M}_g$ through a general point of $\mathcal{M}_g$ and with a general tangent direction (see Lemma \ref{lem:KU} for details). We then conclude by relating these sections to the Hodge bundle associated to the family of genus $g$ curves over $B$, and using the positivity properties provided by the associated Fujita decomposition (\cite{Fuj78b}, \cite{CD:Answer_2017}). We explain this in detail in Subsection \ref{subsec:fujita}. Notice that such a complete curve $B$ exists for $g\geq 3$ because $\mathcal{M}_g$ admits a compactification with boundary of codimension $2$, the Satake compactification (see for instance \cite{Oor74}).

In genus $g=2$, the moduli space of curves is affine, hence non-trivial sheaves on $\cM_2$ have plenty of sections. This indicates that the above results should not hold in this case.

\medskip

We conclude the introduction with a couple of words about infinitesimal rigidity for moduli spaces of surfaces. On the one hand, moduli spaces of surfaces might deform \cite{Hac08}*{Section 6}. On the other hand, for higher dimensional varieties, the Torelli map generalizes to period maps. If we consider a rigid surface $S$ with strictly positive geometric genus or irregularity, then the domain of the period map is a point (the local deformation space of $S$) but the codomain has positive dimension (see, for instance \cite{CMP} Sections 4.4 and 4.5, Example 4.4.5). Hence, in this case the period map admits non-trivial deformations. Examples of these surfaces are the BCD surfaces constructed by Bauer and Catanese \cites{BC08,BC16} or the surfaces with $p_g=q=2$ constructed by Polizzi, Rito, and Roulleau \cite{PRR20}. 
We do not know under which hypotheses the period maps of higher dimensional manifolds with positive dimensional moduli are rigid.

\section{Preliminaries}\label{sec:pre}

We work over the field of complex numbers.

\subsection{First-order deformations of morphisms}\label{subsec:firstorder}

In this paper, we are concerned about first order infinitesimal deformations of certain morphisms of stacks or normal varieties $f\colon X\to Y$ with fixed source and target. If $Y$ is a smooth variety, it is known that these deformations are classified by the global sections of $f^*\cT_Y$ \cite{Sernesi}*{Proposition 3.4.2, page 158}. This fact also holds in quite more general settings. Since we have not been able to find in the literature the statement in the generality we need, we include here a short proof (the same proof works in greater generality, but we give the statement only for our set-up). 

\begin{lemma} \label{lem:def}
Let $f\colon X\to Y$ be a morphism of either smooth Deligne-Mumford stacks or of normal varieties, and let $\cT_Y$ denote the tangent sheaf of $Y$. If $H^0(X, f^*\cT_Y)=0$, then all first-order deformations of $f$ with fixed source and target are trivial.
\end{lemma}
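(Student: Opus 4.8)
The plan is to reduce the statement to a standard sheaf-theoretic computation of the infinitesimal automorphisms/deformations of a morphism. First I would recall the general principle: a first-order deformation of $f\colon X\to Y$ with fixed source $X$ and target $Y$ is a morphism $\tilde f\colon X\times\Spec\bC[\varepsilon]/(\varepsilon^2)\to Y$ restricting to $f$ on the closed fiber. Working locally on affine charts $U=\Spec A$ of $X$ and $V=\Spec R$ of $Y$ (in the stack case, passing to an étale atlas and noting all constructions are étale-local, hence descend), such a $\tilde f$ corresponds to a ring homomorphism $R\to A[\varepsilon]/(\varepsilon^2)$ lifting the one defining $f$. Writing this lift as $r\mapsto f^\sharp(r)+\varepsilon\, D(r)$, the homomorphism condition forces $D$ to be an $R$-derivation into $A$ via $f^\sharp$, i.e. an element of $\Hom_R(\Omega_R, A)=\Hom_A(f^*\Omega_Y, \OO_X)(U)$. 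Globally this identifies the set of first-order deformations with $H^0(X, \mHom(f^*\Omega_Y, \OO_X))$.

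The key point where the hypotheses on $X$ and $Y$ enter is the identification of this $\Hom$-sheaf with $f^*\cT_Y=f^*\mHom(\Omega_Y,\OO_Y)$. Here I would use that $Y$ is either smooth (stack case) or normal (variety case). In the smooth case $\Omega_Y$ is locally free, so $\mHom(f^*\Omega_Y,\OO_X)=f^*\mHom(\Omega_Y,\OO_Y)=f^*\cT_Y$ directly. In the normal-variety case $\Omega_Y$ need not be locally free on the singular locus, but $\cT_Y:=\mHom(\Omega_Y,\OO_Y)$ is the tangent sheaf and one checks the natural map $f^*\cT_Y\to\mHom(f^*\Omega_Y,\OO_X)$ is used to interpret the derivations; the honest deformation-theoretic object is $H^0(X,f^*\cT_Y)$, and the construction of $D$ above produces a section of this. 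Thus $H^0(X,f^*\cT_Y)=0$ forces $D=0$, i.e. $\tilde f=f\circ\mathrm{pr}_X$, which is the trivial deformation.

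Finally I would organize the argument so that the vanishing is applied at the end: given any first-order deformation, the above local-to-global analysis attaches to it a canonical global section $s\in H^0(X,f^*\cT_Y)$, functorially and independently of the chosen charts (this gluing is where one must check the derivations patch, using the cocycle compatibility of the transition data). Since by hypothesis $H^0(X,f^*\cT_Y)=0$, we get $s=0$, and a deformation with vanishing associated section is the trivial one. In the stack setting the only extra care is that morphisms of Deligne--Mumford stacks may carry nontrivial $2$-automorphisms, so "trivial deformation'' should be understood up to $2$-isomorphism; smoothness of $X$ and $Y$ guarantees the relevant cohomology is computed by $f^*\cT_Y$ as above.

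The step I expect to be the main obstacle is the descent/gluing in the stack case: verifying that the locally-defined derivations $D$ glue to a well-defined global section of $f^*\cT_Y$ compatibly with the étale atlas and the $2$-categorical structure, rather than the purely local computation, which is routine.
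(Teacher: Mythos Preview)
Your proposal is correct and is essentially the paper's argument unpacked into local affine charts. The paper packages the same computation globally: it writes the deformation as a $D$-morphism $\hat{f}\colon X\times D\to Y\times D$, observes that such a morphism is determined by its restriction and its differential along the central fibre (because the ideal of $X_o\subset X\times D$ squares to zero), and then reads off the section $v=d\hat{f}(\partial/\partial\varepsilon)|_{X_o}\in H^0(X,f^*\cT_Y)$ from the block form of $d\hat{f}|_{X_o}$ with respect to the splittings $\cT_{X\times D}\cong\cT_X\boxplus\cT_D$ and $\cT_{Y\times D}\cong\cT_Y\boxplus\cT_D$. This global formulation sidesteps the chart-by-chart gluing you flag as the main obstacle, but the underlying content is identical to your derivation computation.

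One small correction on the normal-variety case: your local derivation $D$ naturally lands in $\mHom(f^*\Omega_Y,\cO_X)$, and the comparison map goes $f^*\cT_Y\to\mHom(f^*\Omega_Y,\cO_X)$, which is an isomorphism only where $\Omega_Y$ is locally free. So your sentence ``the honest deformation-theoretic object is $H^0(X,f^*\cT_Y)$'' has the direction of the natural map inverted, and vanishing of $H^0(X,f^*\cT_Y)$ does not by itself kill sections of $\mHom(f^*\Omega_Y,\cO_X)$ without further argument. The paper's proof is equally informal at exactly this point (it writes $d\hat{f}$ as landing in $\hat{f}^*\cT_{Y\times D}$ without comment), and since every application in the paper has a smooth target stack the issue never actually arises; but if you want the normal-variety statement to be airtight you should say a word more here rather than less.
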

\begin{proof}
Let $D$ be the spectrum of $\mathbb{C}[\varepsilon]/(\varepsilon^2)$, and denote by $\{o\}$ its closed point. A first order deformation of $f\colon X\to Y$ is a $D$-morphism $\hat{f}\colon X\times D \to Y\times D$ which, when restricted to the central fibre $X_o:=X\times\{o\}\subset X\times D$ is equal to $f$.

Note that the ideal sheaf of the central fibre $X_o$ squares to zero, so that any $D$-morphism $X\times D\to Y\times D$ is uniquely determined by its restriction to $X_o$ and the restriction of its differential to $X_o$. Moreover, in the case of a first-order deformation $\hat{f}$, its restriction to the central fibre is given by $f$, hence one only needs to study $d\hat{f}_{|X_o}$.

Since it holds $\cT_{X\times D}=\cT_X\boxplus\cT_D$ and analogously $\cT_{Y\times D}=\cT_Y\boxplus\cT_D$, we have
$$\cT_X\boxplus\cT_D\cong \cT_{X\times D}\stackrel{d\hat{f}}{\longrightarrow}\hat{f}^*\cT_{Y\times D}\cong f^*\cT_Y\boxplus\cT_D,$$
hence we can describe $d\hat{f}_{|X_o}$ as
$$\left(
\begin{array}{c|c}
df & v \\
\hline 0 & 1
\end{array}
\right),$$
where $v:=(d\hat{f})\left(\frac{d}{d\varepsilon }\right)_{|X_o}$ and the $1$ in the low-right corner follows from $\hat{f}$ being a $D$-morphism.

From the hypothesis $H^0(X, f^*\cT_Y)=0$ follows that $v=0$. Thus $\hat{f}$ has the same differential as $f\times\id_D: X\times D\to Y\times D$, and by the above remark it follows $\hat{f}=f\times\id_D$ is the trivial deformation of $f$.

\end{proof}



\subsection{Fujita decompositions on the Hodge bundle}\label{subsec:fujita} Let $f:S\to B$ be a fibration from a smooth projective surface $S$ to a smooth projective curve $B$, namely a family of projective curves of arithmetic genus $g$ over a smooth projective curve $B$ whose general fibre is smooth. Denote by $q_f$ the relative irregularity of $f$, defined as the difference $q_f=q(S)-q(B)$ of the irregularities of $S$ and $B$. To any such $f$ one can associate a Hodge bundle $f_\ast\omega_{S/B}=f_\ast\omega_S\otimes \omega_B^\vee$ whose general fibre is of rank $g$ isomorphic to $H^0(C_b, \omega_{C_b})$, where $C_b=f^{-1}(b)$.
\begin{theorem}[\cite{Fuj78b}, \cite{CD:Answer_2017}]\label{Fujita}
The Hodge bundle $f_\ast\omega_{S/B}$ has decompositions of vector bundles 
\begin{equation}
    f_\ast\omega_{S/B}=\cO_{B}^{q_f}\oplus \cV=\cU\oplus \cA
\end{equation}
where $\cA$ is ample and $\cU$ is unitary flat, which are compatible in the sense that $\cO_B^{q_f}\subset \cU$ as vector bundle provides a splitting $\cU=\cO_B^{q_f}\oplus \cU'$.
\end{theorem}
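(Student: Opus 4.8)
The plan is to combine Fujita's semipositivity theorem with the Hodge theory carried by the local system $R^1f_\ast\bC$ over the open set $B^0\subseteq B$ where $f$ is smooth. Over $B^0$ this local system underlies a polarised variation of Hodge structure of weight one, with Hodge filtration $F^1\subset\cH:=R^1f_\ast\bC\otimes\cO_{B^0}$ whose fibre at $b$ is $H^{1,0}(C_b)=H^0(C_b,\omega_{C_b})$. The starting point is the standard identification of $f_\ast\omega_{S/B}$ with the Deligne canonical extension of $F^1$ across the finitely many points of $B\setminus B^0$, together with Fujita's theorem that $f_\ast\omega_{S/B}$ is nef; the differential-geometric source of this nefness is that the Hodge metric $h$ on $F^1$ has Griffiths-semipositive curvature.

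For the first decomposition I would produce the trivial summand by hand. Restricting holomorphic one-forms on $S$ to the fibres of $f$ gives a map $H^0(S,\Omega^1_S)\to H^0(B,f_\ast\omega_{S/B})$ with kernel $f^\ast H^0(B,\Omega^1_B)$; its image is a space of $\nabla$-flat, monodromy-invariant global sections of dimension $q_f=q(S)-q(B)$, which spans a trivial subbundle $\cO_B^{q_f}$. Defining $\cV$ as a complement and invoking Fujita's argument---invariant sections together with semipositivity force a splitting of vector bundles---yields $f_\ast\omega_{S/B}=\cO_B^{q_f}\oplus\cV$, with $\cV$ admitting no further invariant sections.

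The heart of the matter is the finer decomposition, for which I would pass through the Higgs field. The Gauss--Manin connection together with the Hodge metric produces the second fundamental form $\theta\colon F^1\to(\cH/F^1)\otimes\Omega^1_{B^0}$, an $\cO$-linear map that coincides with the Kodaira--Spencer map of the family. I would take $\cU$ to be the maximal holomorphic subbundle on which $\theta$ vanishes identically. By Deligne's semisimplicity of the polarised VHS and the Simpson correspondence, the vanishing of the Higgs field characterises the sub-VHS with unitary monodromy, so $\cU$ underlies a unitary representation of $\pi_1(B^0)$ and is therefore unitary flat (in particular of degree zero with all Harder--Narasimhan slopes equal to zero). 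Taking $\cA$ to be the $h$-orthogonal complement of $\cU$, the Griffiths-semipositivity of the Hodge metric shows that $\cU$ absorbs the entire numerically flat (degree-zero) part of $f_\ast\omega_{S/B}$; thus $\cA$ is a nef bundle whose maximal numerically flat subbundle vanishes, and such a bundle on the curve $B$ is ample.

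Finally, compatibility is almost formal: the monodromy-invariant global sections of the unitary flat bundle $\cU$ are exactly the $\nabla$-flat sections with trivial monodromy, which is precisely the trivial summand $\cO_B^{q_f}$ constructed above, and a unitary flat bundle splits off its invariant part, giving $\cU=\cO_B^{q_f}\oplus\cU'$. I expect the principal obstacle to be the passage from nef to ample for $\cA$, and more generally making the two summands descend from $B^0$ to all of $B$: one has to control the Hodge metric, the Higgs field and the monodromy at the degenerate fibres over $B\setminus B^0$, using the quasi-unipotency of the local monodromies and the nilpotent orbit theorem to guarantee that the orthogonal splitting, the unitarity of $\cU$, and the strict positivity of $\cA$ all extend across the boundary points.
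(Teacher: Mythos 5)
There is a genuine gap in your construction of the second decomposition, and it sits exactly at the point that makes this theorem subtle. You define $\cU$ as ``the maximal holomorphic subbundle on which $\theta$ vanishes identically'' and claim that, by Deligne semisimplicity and the Simpson correspondence, vanishing of the Higgs field characterises the unitary sub-VHS. Only one direction of that characterisation is true: a unitary flat subbundle contained in $F^1$ is killed by $\theta$, but the maximal subbundle killed by $\theta$ is the kernel $\cK=\ker\theta$ of Subsection \ref{subsec:fujita}, and in general the inclusion $\cU\subseteq\cK$ is \emph{strict} --- $\cK$ need not be flat at all, let alone unitary flat. This is not a fringe pathology: the paper records $\cU\subseteq\cK\subseteq f_*\omega_{S/B}$ as (possibly strict) inclusions, and the cited work \cite{GT20} constructs families of curves for which the difference of the ranks of $\cK$ and $\cU$ is arbitrarily large. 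If your characterisation were correct, those examples could not exist. The reason Simpson's correspondence does not apply to $\cK$ is a missing degree condition: a Higgs subsheaf $W\subseteq F^1$ with $\theta(W)=0$ of the degree-zero polystable Higgs bundle $\left(F^1\oplus\cH/F^1,\theta\right)$ only satisfies $\deg W\leq 0$, and it corresponds to a (then automatically unitary) local subsystem precisely when $\deg W=0$; when $\deg W<0$ it is not a summand and carries no flat structure. Consequently, everything you build on top of this definition --- unitarity of $\cU$, the holomorphicity of its $h$-orthogonal complement, and the ampleness of $\cA$ --- fails for $\cK$.

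The fix is to define $\cU$ either as the bundle of the maximal local subsystem of $R^1f_\ast\bC|_{B^0}$ contained in $F^1$ (flatness imposed, Higgs-vanishing then automatic), or, as in \cite{CD:Answer_2017}, through degree-zero quotients: Fujita semipositivity of the Hodge metric forces any degree-zero quotient of $f_\ast\omega_{S/B}$ to have vanishing curvature and second fundamental form, hence to split off as a metric-flat, unitary summand; the complement is then nef with no degree-zero quotient, hence ample by Hartshorne's criterion (\cite{LazII}). With that corrected definition, the rest of your outline does match the actual route of \cite{Fuj78b} and \cite{CD:Answer_2017} (which the paper quotes rather than proves): the trivial summand $\cO_B^{q_f}$ generated by the flat, fibrewise-nonvanishing classes coming from $H^0(S,\Omega^1_S)$, the splitting via Griffiths semipositivity, the compatibility $\cU=\cO_B^{q_f}\oplus\cU'$ by complete reducibility of unitary representations, and the deferral of the genuine technical work to the metric and monodromy estimates at the degenerate fibres.
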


Fujita decompositions are strongly related to the infinitesimal variation of the Hodge structure (ivhs in short), namely with the coboundary morphism $\theta_b: H^0(C_b, \omega_b)\to H^1(C_b, \cO_{C_b})$ of the short exact sequence attached to the first order deformation $\xi_b\in Ext^1(\omega_{C_b},\cO_{C_b})\cong  H^1(T_{C_b})$ induced by $f$ on the fibre $C_b$. Suppose that $f$ is semistable, namely that the relative canonical bundle is $f$-ample and the singular fibers are reduced with at most nodal singularities; then if $\Gamma\subset B$ denotes the set of critical values and $\Upsilon=f^*\Gamma$, there is a canonical isomorphism $f_\ast\omega_{S/B}\simeq f_\ast\Omega^1_{\cC/B}( \log \Upsilon)$, where the latter bundle is defined by the short exact sequence
$$0\to f^\ast \omega_B (\log \Gamma) \to \Omega^1_\cC( \log \Upsilon) \to \Omega^1_{\cC/B}(\log \Upsilon) \to 0.$$
The  connecting homomorphism
\begin{equation} \label{eq:theta}
\theta: f_\ast\omega_{S/B}\simeq f_\ast \Omega^1_{\cC/B}\left(\log\Upsilon\right)\longrightarrow  R^1f_\ast \cO_{\cC}\otimes \omega_B(\log \Gamma)
\end{equation}
is a morphism of locally free sheaves which on the fibres over $b\not\in\Gamma$ coincides with $\theta_b$. The kernel $\cK=\ker \theta$ is a vector subbundle of $f_\ast\omega_{S/B}$ whose fibre over a general $b\in B$ is $\ker\theta_b$. There are natural inclusions $\cU\subseteq\cK\subseteq f_*\omega_{S/B}$. 

We refer to \cite{GT20} for more details on the last paragraph, and a treatment of the non-semistable case, which requires more care and it is not used in this note.


\begin{lemma}\label{lem:KU} Let $\overline{\cM}_g$ be the moduli stack of stable curves of genus $g$. A general complete curve $\pi \colon B\to \overline{\cM}_g$ corresponds to a semistable fibration with $\cU=0$ (more precisely, there exists an open dense subset $U$ of the tangent bundle $\cT\overline{\cM}_g$ such that if the image of $d\pi$ intersects $U$, then $\cU=0$).
\end{lemma}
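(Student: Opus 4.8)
The plan is to prove the stronger vanishing $\cK=0$ for a general $B$, which suffices by the inclusion $\cU\subseteq\cK$ recalled above. Since $\cK=\ker\theta$ is a subbundle of $f_*\omega_{S/B}$ whose fibre over a general $b\in B$ equals $\ker\theta_b$, one has $\cK=0$ precisely when $\ker\theta_b=0$ for general $b$. The rank of the sheaf morphism $\theta$ is lower semicontinuous on $B$, so it is enough to find a single $b_0\in B$ with smooth fibre at which $\theta_{b_0}$ is injective: the locus where $\theta_b$ has maximal rank $g$ is then a non-empty open subset of $B$, hence contains the general point, giving $\cK=0$ and therefore $\cU=0$.

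Over such a $b_0\notin\Gamma$ the map $\theta_{b_0}$ is cup product with the Kodaira--Spencer class $\xi_{b_0}\in H^1(C_{b_0},T_{C_{b_0}})$ spanning $d\pi(\cT_{b_0}B)$, and under Serre duality it is the symmetric bilinear form
$$Q_\xi(\omega,\eta)=\langle\xi,\omega\cdot\eta\rangle,\qquad \omega,\eta\in H^0(C_{b_0},\omega_{b_0}),$$
where $\omega\cdot\eta\in H^0(\omega_{b_0}^{\otimes2})$ and $\langle\,,\,\rangle\colon H^1(T_{C_{b_0}})\times H^0(\omega_{b_0}^{\otimes2})\to\bC$ is the duality pairing; thus $\theta_{b_0}$ is injective exactly when $Q_{\xi_{b_0}}$ is non-degenerate. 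I would then define $U\subseteq\cT\overline{\cM}_g$ as the set of pairs $([C],\xi)$ with $C$ smooth and $Q_\xi$ non-degenerate. This locus is open, being the non-vanishing set of $\det Q_\xi$, and it is a cone because $Q_{c\xi}=cQ_\xi$. As $\cT\overline{\cM}_g$ is irreducible, density of $U$ follows once we know $U\neq\emptyset$.

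This non-emptiness is the only substantial point, and the main obstacle. I would settle it with Schiffer variations. For $p\in C$ the Schiffer class $\xi_p\in H^1(T_C)$ pairs with quadratic differentials by evaluation at $p$, so that $Q_{\xi_p}(\omega,\eta)=\omega(p)\,\eta(p)=v_p(\omega)\,v_p(\eta)$ is the rank-one form $v_p^2$, where $v_p\in H^0(C,\omega_C)^\vee$ denotes evaluation at $p$ in a local trivialisation. Taking $g$ general points $p_1,\dots,p_g$, the vanishing $h^0\!\big(\omega_C(-p_1-\cdots-p_g)\big)=0$ shows that they impose independent conditions on $H^0(\omega_C)$, so that $v_{p_1},\dots,v_{p_g}$ form a basis of $H^0(\omega_C)^\vee$. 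Hence for the deformation $\xi=\sum_i c_i\,\xi_{p_i}$ with all $c_i\neq0$ one obtains $Q_\xi=\sum_i c_i\,v_{p_i}^2$, which is diagonal and non-degenerate in the dual basis. Therefore $([C],\xi)\in U$ and $U\neq\emptyset$.

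Finally, a general complete curve $\pi\colon B\to\overline{\cM}_g$ meets $\cM_g$ and can be taken to correspond to a semistable fibration; being general, its tangent image intersects the open dense cone $U$, and the reduction of the first paragraph then forces $\cU=0$. The delicate input throughout is the quantitative behaviour of $\theta_b$: everything hinges on the fact that a generic Kodaira--Spencer direction produces a non-degenerate cup-product form, which is exactly what the Schiffer computation guarantees.
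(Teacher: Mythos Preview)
Your proof is correct and follows essentially the same route as the paper: reduce to showing that $\theta_b$ has maximal rank for a general tangent direction, so that the fibre $\cK_b$ vanishes, and then conclude $\cU=0$ from the inclusion $\cU\subseteq\cK$ of locally free sheaves. The only difference is that where the paper cites \cite{GT20} and \cite{LP16}*{Lemma 2.4} for the maximal-rank statement, you supply a self-contained argument via Schiffer variations at $g$ general points; this is a standard and correct way to establish that fact, and makes the proof independent of those references.
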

\begin{proof}
Curves in $\overline{\cM}_g$ correspond by construction to semistable fibrations. For a general smooth curve $[C_b]$ in $\cM_g$ and a general direction $\xi_b\in T_{[C_b]}\overline{\cM}_g \simeq H^1(C_b,T_{C_b})\simeq Ext^1(\omega_{C_b}, \cO_{C_b})$, the induced linear map $\theta_b\colon H^0(C_b,\omega_{C_b})\to H^1(C_b,\cO_{C_b})$ has maximal rank (see for example \cite{GT20}, \cite{LP16}*{Lemma 2.4}), so the fibre $\cK_b$ is zero. As $\cU$ is locally free and contained in $\cK$, we obtain the statement. 
\end{proof}

\subsection{Ample vector bundles on curves}

\begin{lemma} \label{lem:no-sections}
    If $\cA$ is an ample vector bundle over a smooth projective curve $B$, then it holds $H^0(B,\Sim^n\cA^{\vee})=0$ for every $n>0$.
\end{lemma}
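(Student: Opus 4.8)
The plan is to reduce the statement to the nonexistence of a nonzero homomorphism out of an ample bundle into $\cO_B$, and then to invoke the degree characterization of ampleness on curves. Since we work over $\mathbb{C}$, forming symmetric powers commutes with dualizing (the natural pairing $\Sym^n\cA\otimes\Sym^n\cA^{\vee}\to\cO_B$ is perfect in characteristic zero), so $\Sym^n\cA^{\vee}\cong(\Sym^n\cA)^{\vee}$, and therefore
$$H^0(B,\Sym^n\cA^{\vee})\cong\Hom_{\cO_B}(\Sym^n\cA,\cO_B).$$
Thus it suffices to show that for every $n>0$ there is no nonzero homomorphism $\varphi\colon\Sym^n\cA\to\cO_B$.

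Suppose such a $\varphi\neq 0$ existed. First I would examine its image, a nonzero coherent subsheaf of $\cO_B$. Because $B$ is a smooth projective curve, every nonzero subsheaf of $\cO_B$ is an ideal sheaf of the form $\cO_B(-D)$ for some effective divisor $D\geq 0$, and in particular it is a line bundle. Hence $\varphi$ factors through a surjection $\Sym^n\cA\twoheadrightarrow\cO_B(-D)$ onto a quotient \emph{line} bundle of degree $\deg\cO_B(-D)=-\deg D\leq 0$.

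To reach a contradiction I would use two classical inputs. First, symmetric powers of an ample bundle are again ample, so $\Sym^n\cA$ is ample for every $n>0$. Second, by Hartshorne's characterization of ample bundles on a smooth projective curve, ampleness is equivalent to strict positivity of the minimal slope, and in particular every quotient line bundle of an ample bundle has strictly positive degree (this necessity direction follows by restricting $\cO_{\mathbb{P}(\Sym^n\cA)}(1)$ to the section of $\mathbb{P}(\Sym^n\cA)$ determined by the quotient). Applying this to $\Sym^n\cA\twoheadrightarrow\cO_B(-D)$ gives $-\deg D>0$, which contradicts $D\geq 0$. Hence no nonzero $\varphi$ exists, and $H^0(B,\Sym^n\cA^{\vee})=0$.

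The argument is otherwise elementary sheaf theory on a smooth curve, so there is no genuine obstacle; the only points requiring external results are the two standard facts invoked at the end, namely the stability of ampleness under symmetric powers and the equivalence, for bundles on a curve, between ampleness and positivity of the degree of every quotient line bundle. Both are due to Hartshorne (\emph{Ample vector bundles on curves}), and one could alternatively replace the second with a direct Harder--Narasimhan argument showing $\mu_{\min}(\Sym^n\cA)>0$, which likewise rules out a quotient of nonpositive degree.
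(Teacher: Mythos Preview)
Your proof is correct and follows essentially the same approach as the paper: both pass from a nonzero section of $\Sym^n\cA^{\vee}$ to a nonzero map $\Sym^n\cA\to\cO_B$, observe that its image is a line bundle of nonpositive degree because $B$ is a smooth curve, and derive a contradiction from the fact that $\Sym^n\cA$ is ample and hence has only quotients of positive degree. The only cosmetic difference is that you make the identification $\Sym^n(\cA^{\vee})\cong(\Sym^n\cA)^{\vee}$ explicit and describe the image as $\cO_B(-D)$, whereas the paper dualizes the section directly and argues via torsion-freeness.
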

\begin{proof}
    Note first that if $\cA$ is ample, then so is $\Sim^n\cA$ (\cite{LazII}*{Theorem 6.1.15}). In particular any quotient line bundle $\Sim^n\cA\twoheadrightarrow Q$ is ample on $B$ (\cite{LazII}*{Proposition 6.1.2}), i.e. $\deg Q>0$.
    
    Suppose $H^0\left(\Sim^n\cA^{\vee}\right)\neq 0$ and let $\sigma$ be a non-zero section, which induces a morphism of sheaves $\sigma\colon\cO_B\to\Sim^n\cA^{\vee}$. Dualizing it we obtain a non-zero map $\Sim^n\cA\to\cO_B$, whose image is a quotient of $\Sim^n\cA$ and a non-zero subsheaf $Q\subseteq\cO_B$. In particular $Q$ is torsion-free, and hence a locally-free sheaf because $B$ is a smooth curve. Moreover $\deg Q\leq\deg\cO_B=0$, contradicting the amplitude of $\Sim^n\cA$.
\end{proof}

\section{Proof of Theorem \ref{thm:main1}}\label{sec:proof1}
In this section we give the proof of Theorem \ref{thm:main1}, asserting that the Torelli morphism $\tau: \cM_g\to \cA_g$ does not admit non-trivial first-order deformations. 

Recall that $\cM_g$ denotes the moduli stack of smooth projective curves of genus $g$, $\cA_g$ the moduli stack of principally polarized abelian varieties of dimension $g$, and $\TorM:\cM_g\to\cA_g$ the Torelli morphism, which at the level of points maps (the isomorphism class of) a smooth projective curve $C$ to its Jacobian variety $J(C)\cong H^1\left(C,\cO_C\right)/H^1\left(C,\mathbb{Z}\right)$ with its natural principal polarization $\Theta_C$.

The tangent space to $\cM_g$ at $\left[C\right]$ is $H^1\left(C,T_C\right)$, and the tangent space to $\cA_g$ at $\left[J\left(C\right),\Theta_C\right]$ is
$$\Sym^2H^1\left(C,\cO_C\right)\cong\Sym^2 H^0\left(C,\omega_C\right)^{\vee}\cong\Hom^s\left(H^0\left(C,\omega_C\right),H^1\left(C,\cO_C\right)\right),$$
where $\Hom^s$ denotes the set of symmetric (i.e. self-dual) linear maps.

Moreover the image of $\xi\in H^1\left(C,T_C\right)\cong T_{[C]}\cM_g$ under the differential of $\TorM$ can be identified (up to non-zero scalar) with the multiplication map (cup-product followed by contraction)
$$H^0\left(C,\omega_C\right)\to H^1\left(C,\cO_C\right),\quad \alpha\mapsto \xi\cdot\alpha$$

By Lemma \ref{lem:def}
, Theorem \ref{thm:main1} follows from the following vanishing:
\begin{theorem}
If $g \geq 3$, then $H^0(\cM_g, \tau^*\cT_{\cA_g})=0$.
\end{theorem}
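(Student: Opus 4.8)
The plan is to prove the vanishing $H^0(\cM_g,\tau^*\cT_{\cA_g})=0$ by reducing to a computation on a suitable complete curve inside the moduli stack, and then exploiting the positivity coming from the Fujita decomposition. Concretely, since $\cM_g$ is not complete I cannot directly use global positivity; the strategy is to restrict a hypothetical global section to a general complete curve $B$ and show it must vanish there, and then argue that vanishing on enough curves forces global vanishing. To this end, fix a nonzero global section $s\in H^0(\cM_g,\tau^*\cT_{\cA_g})$ and suppose for contradiction it is nonzero at a general point $[C]$. Recall from the discussion preceding the statement that the fibre of $\tau^*\cT_{\cA_g}$ at $[C]$ is $\Sym^2 H^0(C,\omega_C)^\vee$, so $s$ is a section of the second symmetric power of the dual of the Hodge bundle over $\cM_g$.

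The key step is to choose, using Lemma \ref{lem:KU}, a general complete curve $\pi\colon B\to\overline{\cM}_g$ through a general point $[C]$ and in a general tangent direction, so that the associated semistable fibration $f\colon S\to B$ has $\cU=0$ in its Fujita decomposition. By Theorem \ref{Fujita}, this means the Hodge bundle restricts on $B$ to an \emph{ample} vector bundle $\cA=f_\ast\omega_{S/B}$ (since $\cU=0$ forces $q_f=0$ and $f_\ast\omega_{S/B}=\cA$). The pullback $\pi^\ast\left(\tau^\ast\cT_{\cA_g}\right)$ is then naturally identified with $\Sym^2\cA^\vee$ over $B$. The restriction $s|_B$ is a global section of $\Sym^2\cA^\vee$, and Lemma \ref{lem:no-sections} (applied with $n=2$) gives $H^0(B,\Sym^2\cA^\vee)=0$. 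Hence $s|_B=0$; in particular $s$ vanishes at the general point $[C]$ along a general direction, and since $[C]$ was general this shows $s$ vanishes identically on a dense open subset, hence everywhere by continuity.

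There is a subtlety I must address carefully: a priori $s$ is a section over the open stack $\cM_g$, whereas the Fujita decomposition lives over the \emph{complete} curve $B\subset\overline{\cM}_g$. I would handle this by noting that a general complete curve $B$ in $\overline{\cM}_g$ meets $\cM_g$ in a dense open subset, and that the section $s$ pulls back to a section of $\Sym^2\cA^\vee$ over this open part of $B$; since $\Sym^2\cA^\vee$ is a vector bundle over the smooth projective curve $B$ and a bounded holomorphic/algebraic section over a dense open extends across the finitely many boundary points (the complement is a finite set of points, and the section, being the restriction of a global section of a coherent sheaf, is regular there), I obtain a genuine global section of $\Sym^2\cA^\vee$ on all of $B$. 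This is where the assumption $g\geq 3$ enters through the existence of complete curves in $\cM_g$ (equivalently, boundary of codimension $\geq 2$ in the Satake compactification, as noted in the introduction).

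The main obstacle I anticipate is precisely this passage from sections over the non-complete $\cM_g$ to genuine global sections over the complete test curve $B$, together with verifying that a \emph{single} general curve through a general point with general direction suffices to detect nonvanishing of $s$ at that point. The cleanest resolution is to observe that if $s$ were nonzero at the general $[C]$, then as a symmetric bilinear form on $H^0(C,\omega_C)$ it would be nonzero on some tangent direction $\xi\in H^1(C,T_C)$, and by Lemma \ref{lem:KU} the general direction realizes a fibration with $\cU=0$; thus the relevant $B$ can be chosen to keep $s|_B$ nonzero, contradicting $H^0(B,\Sym^2\cA^\vee)=0$. Making this genericity argument precise — ensuring the complete curve $B$ can be taken tangent to a direction on which $s$ does not vanish while simultaneously having $\cU=0$ — is the delicate point, but it follows from the openness of the condition $\cU=0$ in Lemma \ref{lem:KU} combined with the fact that the locus where $s\neq 0$ is open and dense.
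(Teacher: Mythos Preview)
Your overall strategy matches the paper's: restrict a putative section to a general complete curve $B$, identify the pulled-back bundle with $\Sym^2$ of the dual Hodge bundle $E$, use Lemma~\ref{lem:KU} to force $E$ to be ample, and apply Lemma~\ref{lem:no-sections}. However, your third paragraph contains an unjustified step. If $B\subset\overline{\cM}_g$ meets the boundary, then $s$ is defined only on the open part $B\cap\cM_g$, and a section of a vector bundle over a punctured curve need not extend across the punctures; your assertion that it extends because it is ``the restriction of a global section of a coherent sheaf'' is circular, since that sheaf lives on $\cM_g$, not on $\overline{\cM}_g$, and no boundedness has been established. The correct resolution is exactly what you allude to at the end of that paragraph and what the paper actually does: for $g\geq 3$ the Satake boundary has codimension $\geq 2$, so through any point of $M_g^0$ and tangent to any prescribed direction one can take a smooth projective curve $B$ lying \emph{entirely in} $\cM_g$. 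With this choice no extension argument is needed at all.

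Your final paragraph is then an unnecessary complication. Once $H^0(B,\Sym^2 E^\vee)=0$, the restriction $s|_B$ vanishes identically and in particular $s([C])=0$; the tangent direction of $B$ at $[C]$ is relevant only for securing $\cU=0$ via Lemma~\ref{lem:KU}, not for ``detecting'' $s$. (Your sentence about $s$ being nonzero ``on some tangent direction $\xi\in H^1(C,T_C)$'' also conflates the fibre $\Sym^2 H^0(C,\omega_C)^\vee$, where $s([C])$ lives, with the tangent space $H^1(C,T_C)$ to $\cM_g$; these are different vector spaces.)
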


\begin{proof}
Since $\tau^*\cT_{\cA_g}$ is locally free and $\cM_g$ is reduced, if we show that for every point in a dense subset  of $\cM_g$ there exists a curve $\pi \colon B \to \cM_g$ such that $h^0(B,\pi^*\tau^*\cT_{\cA_g})=0$, then $h^0(\cM_g,\tau^*\cT_{\cA_g})=0$.

When $g \geq 3$, the coarse moduli space of $\cM_g$ admits a normal projective compactification whose boundary has codimension two, namely the Satake compactification. In $\cM_g$, we can look at the open subset $M_g^0$ of curves with trivial automorphism group, which is representated by a smooth scheme and whose complement in the Satake compactification has also codimension two. Because of this, for every point $[C]$ of $M_g^0$ and every tangent direction $v$ in $\cT_{[C]}\cM_g$, we can find a smooth projective curve $\pi\colon B \to \cM_g$ passing trough $[C]$ and tangent to $v$.

 Consider such a curve $B$, the corresponding family of curves $f\colon\cC\to B$ and the Hodge bundle $E=f_*\Omega^1_{\cC/B}$, whose fibre over a point $\left[C\right]\in B$ is $H^0\left(C,\omega_C\right)$. By the above discussion, the restriction of $\pi^*\tau^*\cT_{\cA_g}$ to $B$ is naturally isomorphic to $\Sim^2E^\vee$. Now by Theorem \ref{Fujita}, $E$ carries a decomposition $E=\cA\oplus \cU$ with $\cA$ ample and $\cU$ unitary flat, and by Lemma \ref{lem:KU}, $\cU$ is zero for general $B$ and $v$. By Lemma \ref{lem:no-sections}, $h^0(B,\pi^*\tau^*\cT_{\cA_g})=0$.

\end{proof}

\section{Proof of Theorem \ref{thm:main2}}\label{sec:proof2}

As already recalled in the introduction, we denote by $\cR_g$ the moduli stack of pairs $\left(C,\eta\right)$, where $C$ is a smooth projective curve of genus $g$, and $\eta\in J\left(C\right)$ is a non-trivial line bundle of order two (i.e. $\eta^{\otimes2}\cong\cO_C$). By standard theory, such a pair is equivalent to an étale double cover $\pi\colon C'\to C$ where $C'$ is a connected smooth projective curve and
\begin{equation} \label{eq:Prym-cover}
\pi_*\cO_{C'}=\cO_C\oplus\eta.
\end{equation}
More precisely, since $\pi$ is finite, there is a trace morphism $\Tr\colon\pi_*\cO_{C'}\to\cO_C$ that splits the structure morphism $\cO_C\to\pi_*\cO_{C'}$, and $\eta=\ker\Tr$.

One way to define the Prym variety $\PrymM\left(C,\eta\right)$ of the pair $\left(C,\eta\right)$ (or the cover $\pi\colon C'\to C$) is as the cokernel of the pull-back map
$$\pi^*\colon J\left(C\right)\to J\left(C'\right),$$
which has dimension $\dim\PrymM\left(C,\eta\right)=g\left(C'\right)-g\left(C\right)=g-1$ (note that $g\left(C'\right)=2g\left(C\right)-1$ by the Hurwitz formula).

Alternatively $\PrymM\left(C,\eta\right)$ can be defined as the connected component trough $\left[\cO_{C'}\right]$ of the Norm map $\Nm\colon J\left(C'\right)\to J\left(C\right)$.

The natural principal polarization of $J\left(C'\right)$ induces twice a principal polarization $\Xi$ on $\PrymM\left(C,\eta\right)$. The Prym morphism $\PrymM\colon\cR_g\to\cA_{g-1}$ is then defined (at the level of $\mathbb{C}$-points) as
$$\left[C,\eta\right]\to \left[\PrymM\left(C,\eta\right),\Xi\right].$$

Since $\cR_g$ is an étale cover of $\cM_g$ (of degree $2^{2g}-1$), the tangent space of $\cR_g$ at a point $\left[C,\eta\right]$ is naturally isomorphic to $T_{[C]}\cM_g\cong H^1\left(C,T_C\right)$.

On the other hand, the tangent spaces of the Jacobians $J\left(C\right)$ and $J\left(C'\right)$ (at the corresponding neutral elements) are $H^1\left(C,\cO_C\right)$ and $H^1\left(C',\cO_{C'}\right)$. Thus the tangent space of $\PrymM\left(C,\eta\right)$ is naturally isomorphic to
$$H^1\left(C',\cO_{C'}\right)/\pi^*H^1\left(C,\cO_C\right)\cong H^1\left(C,\eta\right).$$
In the last isomorphism we have combined equation \eqref{eq:Prym-cover} and the fact that $\pi$ is a finite morphism to obtain
$$H^1\left(C',\cO_{C'}\right)\cong H^1\left(C,\pi_*\cO_{C'}\right)\cong H^1\left(C,\cO_C\right)\oplus H^1\left(C,\eta\right).$$

Therefore the tangent space of $\cA_{g-1}$ at $\PrymM\left(C,\eta\right)$ can be naturally identified with
\begin{align*}
\Sym^2T_0\PrymM\left(C,\eta\right)&\cong\Sym^2H^1\left(C,\eta\right)\cong\Hom^s\left(H^1\left(C,\eta\right)^{\vee},H^1\left(C,\eta\right)\right)\\
&\cong\Hom^s\left(H^0\left(C,\omega_C\otimes\eta\right),H^1\left(C,\eta\right)\right),
\end{align*}
where the last isomorphism follows from Serre duality and $\eta^{\otimes 2}\cong\cO_C$.

Finally the differential of the Prym morphism at $\left[C,\eta\right]$,
$$d\PrymM_{\left[C,\eta\right]}\colon H^1\left(C,T_C\right)\to \Sym^2H^1\left(C,\eta\right)\cong\Hom^s\left(H^0\left(C,\omega_C\otimes\eta\right),H^1\left(C,\eta\right)\right)$$
is induced by cup-product (up to a non-zero scalar).

As in the above section, Theorem \ref{thm:main2} follows from Lemma \ref{lem:def} and the following vanishing:
\begin{theorem}
When $g \geq 3$, it holds $H^0(\cR_g, \PrymM^*\cT_{\cA_{g-1}})=0$.
\end{theorem}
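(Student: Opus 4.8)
The plan is to follow exactly the strategy established in the proof of Theorem \ref{thm:main1}, since the computation of the differential of the Prym morphism has already set everything up analogously. The key point is that $\PrymM^*\cT_{\cA_{g-1}}$ is locally free and $\cR_g$ is reduced, so it suffices to exhibit, through a dense subset of points, a complete smooth curve $\pi\colon B\to\cR_g$ with $h^0(B,\pi^*\PrymM^*\cT_{\cA_{g-1}})=0$. First I would pass to the coarse picture as before: since $\cR_g$ is an \'etale cover of $\cM_g$, for $g\geq 3$ we can find, through a general point $[C,\eta]$ and in a general tangent direction, a smooth complete curve $B\subset\cR_g$ mapping to a semistable family. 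The corresponding family of double covers $\pi\colon\cC'\to\cC\to B$ gives the relevant Hodge-theoretic bundles over $B$.

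The crucial computation is to identify the restriction of $\PrymM^*\cT_{\cA_{g-1}}$ to $B$. From the tangent space description in the excerpt, the fibre is $\Sym^2 H^1(C,\eta)\cong\Sym^2 H^0(C,\omega_C\otimes\eta)^\vee$. I would globalize this: let $E'=f'_*\Omega^1_{\cC'/B}$ be the Hodge bundle of the family $\cC'\to B$, which splits according to the decomposition $\pi_*\cO_{C'}=\cO_C\oplus\eta$ into the pullback part (the Hodge bundle $E$ of $\cC\to B$) and an ``anti-invariant'' or Prym part $P$ whose fibre is $H^0(C,\omega_C\otimes\eta)$. Thus the restriction of $\PrymM^*\cT_{\cA_{g-1}}$ to $B$ is naturally isomorphic to $\Sym^2 P^\vee$. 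The main structural fact I need is that $P$ is itself a direct summand of the Hodge bundle $E'$ of the double cover, so that the Fujita decomposition of Theorem \ref{Fujita} applied to $E'$ restricts to a decomposition $P=\cA_P\oplus\cU_P$ with $\cA_P$ ample and $\cU_P$ unitary flat.

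The main obstacle is to ensure the unitary flat part $\cU_P$ of the Prym bundle vanishes for a general choice of $B$ and direction $v$. The analogue of Lemma \ref{lem:KU} must be checked for $P$ rather than for the full Hodge bundle $E$: I would argue that the relevant kernel $\cK_P$, cut out by the infinitesimal variation of Hodge structure restricted to the anti-invariant part, has zero fibre for a general smooth $[C,\eta]$ and a general direction $\xi\in H^1(C,T_C)$. Concretely this amounts to showing the cup-product map $H^0(C,\omega_C\otimes\eta)\to H^1(C,\eta)$, $\alpha\mapsto\xi\cdot\alpha$, has maximal rank for general $\xi$; since both spaces have dimension $g-1$, injectivity is what is needed. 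This is the Prym analogue of the maximal-rank statement used in Lemma \ref{lem:KU}, and is where one must invoke a genericity result for the Prym infinitesimal variation of Hodge structure (e.g. the references on the Prym-Torelli ivhs). Granting this, $\cU_P=0$ generically because $\cU_P\subseteq\cK_P$ and $\cU_P$ is locally free, so $P$ is ample on $B$.

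Once $P$ is ample, the conclusion is immediate from Lemma \ref{lem:no-sections}: we get
$$H^0(B,\pi^*\PrymM^*\cT_{\cA_{g-1}})\cong H^0(B,\Sym^2 P^\vee)=0,$$
which proves the required vanishing $H^0(\cR_g,\PrymM^*\cT_{\cA_{g-1}})=0$ and hence, via Lemma \ref{lem:def}, Theorem \ref{thm:main2}. The only genuinely new input compared to the Torelli case is the identification of the Prym summand $P$ inside the Hodge bundle of the double cover together with the maximal-rank genericity of its ivhs; everything else transposes verbatim from Section \ref{sec:proof1}.
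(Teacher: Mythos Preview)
Your proposal follows essentially the same strategy as the paper: restrict to a general complete curve $B\subset\cR_g$ (obtained by lifting a complete curve in $M_g^0$ through the \'etale cover $\cR_g\to\cM_g$), identify $\PrymM^*\cT_{\cA_{g-1}}|_B\cong\Sym^2 P^\vee$ where $P$ is the anti-invariant summand of the Hodge bundle $E'=f'_*\Omega^1_{\cC'/B}$ of the double-cover family, show $P$ is ample for general $B$, and conclude via Lemma \ref{lem:no-sections}. The key new input you correctly isolate---that cup-product with a general $\xi\in H^1(C,T_C)$ gives an isomorphism $H^0(C,\omega_C\otimes\eta)\to H^1(C,\eta)$---is exactly what the paper uses, citing \cite{LP16}*{Lemma 2.4}.

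One point of imprecision worth flagging: your claim that the Fujita decomposition of $E'$ ``restricts'' to a decomposition $P=\cA_P\oplus\cU_P$ is not how Theorem \ref{Fujita} works; it is a statement about the full Hodge bundle of a fibration, and compatibility with an arbitrary direct summand is not automatic. The paper sidesteps this cleanly: since the IVHS of $\cC'\to B$ is block-diagonal with respect to $E'=E\oplus P$, and \cite{LP16}*{Lemma 2.4} shows that \emph{both} diagonal blocks are isomorphisms for general $\xi$, the full kernel $\cK'$ has zero fibre at a general point, so $\cU'=0$ and $E'$ itself is ample; then $P$, being a quotient of $E'$, is ample. Your route reaches the same conclusion once this is straightened out, so the argument is correct in substance.
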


\begin{proof}
By construction, for a  given curve $C$ there are $2^{2g}-1$ choices of $\eta$, and indeed this gives a natural étale morphism $\varphi\colon\cR_g\to\cM_g$ of degree $2^{2g}-1$. Set $R^0_g=\varphi^{-1}\left(M_g^0\right)$, local chart of $\cR_g$ corresponding to coverings $C'\to C$ where $C$ has trivial autormorphism group. Moreover, since $M^0_g$ can be covered by smooth projective curves, so does $R^0_g$ by taking the connected components of the preimages under $\varphi$.

Let now $B\subseteq R_g^0$ be a general smooth curve, which corresponds to a family of coverings $f'\colon\cC'\stackrel{\pi}{\to}\cC\stackrel{f}{\to} B$. The induced morphism $\pi$ is also a étale double cover of surfaces. The trace of $\pi$ gives a splitting $\pi_*\cO_{\cC'}\cong\cO_{\cC}\oplus\cL$, where $\cL=\ker\Tr$ restricts to $\eta$ on a fibre $C'\to C$. In particular we also have \begin{equation} \label{eq:splitting-Prym}
R^1f'_*\cO_{\cC'}\cong R^1f_*\cO_{\cC}\oplus R^1f_*\cL,
\end{equation}
and by the above discussion on tangent spaces, there is a natural identification
$$\PrymM^*\cT_{\cA_{g-1}}\cong\Sym^2\left(R^1f'_*\cO_{\cC'}/R^1f_*\cO_{\cC}\right)\cong\Sym^2R^1f_*\cL.$$
By relative duality, the equation \eqref{eq:splitting-Prym} gives
$$f'_*\Omega^1_{\cC'/B}\cong f_*\Omega^1_{\cC/B}\oplus\left(R^1f_*\cL\right)^{\vee}$$
so that $\left(R^1f_*\cL\right)^{\vee}$ is isomorphic to a quotient of the Hodge bundle $E'=f'_ *\Omega^1_{\cC'/B}$ of $f'$.

We can now adapt the proof of Lemma \ref{lem:KU} to show that $E'$ is ample for a general $B$, and then Lemma \ref{lem:no-sections} concludes the proof of the theorem.

Let $C'\to C$ be one of the coverings of the family $\pi$, with corresponding $\eta\in\Pic^0\left(C\right)$, and let $\xi\in H^1\left(C,T_C\right)\cong T_{[C]}M_g^0\cong T_{[C'\to C]}R_g^0$ be a tangent vector to $B$. Using the decompositions
$$H^0\left(C',\omega_C'\right)=H^0\left(C,\omega_C\right)\oplus H^0\left(C,\omega_C\otimes\eta\right)\quad\text{and}\quad H^1\left(C',\cO_{C'}\right)=H^1\left(C,\cO_C\right)\oplus H^1\left(C,\eta\right)$$
the infinitesimal variation of Hodge structure $\cC'\to B$ at this point is ``diagonal'', given by multiplication with $\eta$ on each component. By \cite{LP16}*{Lemma 2.4}, a general $\xi\in H^1\left(C,T_C\right)$ gives isomorphisms in both components. Taking a smooth projective curve in $M_g^0$ through $\left[C\right]$ with tangent vector $\xi$, and then $B$ the appropriate connected component of its preimage in $R^g_0$ induces a family $f'\colon\cC'\to B$ with ample Hodge bundle, as wanted.
\end{proof}

\section{Proof of Theorem \ref{thm:main3}}\label{sec:proof3}

Recall from the introduction that $\cS_g$ is the moduli stack of pairs $\left(C,\vartheta\right)$ of projective curves of genus $g$ with a theta characteristic $\vartheta$, i.e. $\vartheta\in\Pic^{g-1}\left(C\right)$ such that $\vartheta^{\otimes 2}\cong\omega_C$. Since two theta characteristics differ by a $2$-torsion element of the $g$-dimensional abelian variety $\Pic^0\left(C\right)$, the natural forgetful morphism $\pi\colon\cS_g\to\cM_g$ defined on points by $\left(C,\vartheta\right)\to C$ is étale of degree $2^{2g}$.

On the other side, $\cN_g$ denotes the moduli stack of pairs $\left(A,\Theta\right)$, where $A$ is an abelian variety of dimension $g$ and $\Theta\subseteq A$ is a symmetric divisor inducing a principal polariaztion on $A$, i.e. $-\Theta=\Theta$ and $h^0\left(\cO_A\left(\Theta\right)\right)=1$.

In order to describe the morphism $\SpinM\colon\cS_g\to\cN_g$, let's first quickly recall one construction of the principal polarization on the jacobian variety $J\left(C\right)=\Pic^0\left(C\right)$ of a projective curve $C$ of genus $g$. There is a natural morphism
$$\varphi\colon C^{g-1}\to\Pic^{g-1}\left(C\right),\quad \left(p_1,\ldots,p_{g-1}\right)\mapsto\left[\cO_C\left(p_1+\ldots+p_{g-1}\right)\right].$$
By the Riemann parametrization theorem, $\varphi$ is birational onto its image, which is thus a divisor. Moreover, its image is precisely the set $W_{g-1}^0=\left\{L\in\Pic^{g-1}\left(C\right)\,|\,h^0\left(L\right)>0\right\}$. Any fixed $\vartheta\in\Pic^{g-1}\left(C\right)$ induces an isomorphism (of algebraic varieties)
$$\Pic^{g-1}\left(C\right)\to\Pic^0\left(C\right)=J\left(C\right),\quad\left[L\right]\mapsto \left[L\right]-\left[\vartheta\right]:=\left[L\otimes\vartheta^{\vee}\right].$$
The image $\Theta_{\vartheta}:=W_{g-1}^0-\left[\vartheta\right]$ of $W_{g-1}^0$ is thus a divisor in $J\left(C\right)$ and induces the principal polarization used in the Torelli morphism $\tau$. An easy application of Riemann-Roch shows that $\Theta_{\vartheta}$ is symmetric if and only if $\vartheta^{\otimes 2}\cong\omega_C$.

The morphism $\SpinM$ is defined by mapping $\left[S,\vartheta\right]$ to the pair $\left(J\left(C\right),\Theta_{\vartheta}\right)$.

The above discussion also shows that the natural principal polarization on $J\left(C\right)$ can be represented by exactly $2^{2g}$ symmetric divisors, hence the forgetful morphism $\pi'\colon\cN_g\to\cA_g$ is also étale of degree $2^{2g}$.

As in the two preceding cases, Theorem \ref{thm:main3} follows from Lemma \ref{lem:def} and the following vanishing:
\begin{theorem}
When $g\geq 3$, it holds $H^0(\cS_g, \SpinM^*\cT_{\cN_g})=0$.
\end{theorem}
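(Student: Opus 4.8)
The plan is to reduce the statement directly to the Torelli vanishing already proved in Theorem \ref{thm:main1}, exploiting that a theta characteristic is discrete data attached to the curve and thus does not affect the underlying abelian variety or its polarization class. The key structural observation is that both forgetful morphisms $\pi\colon\cS_g\to\cM_g$ and $\pi'\colon\cN_g\to\cA_g$ are étale (of degree $2^{2g}$), and that the square
$$\begin{CD}
\cS_g @>{\SpinM}>> \cN_g\\
@V{\pi}VV @VV{\pi'}V\\
\cM_g @>{\tau}>> \cA_g
\end{CD}$$
commutes, since $\SpinM$ sends $\left[C,\vartheta\right]$ to $\left(J\left(C\right),\Theta_\vartheta\right)$ and $\pi'$ forgets the choice of symmetric divisor representing the polarization. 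From $\pi'$ being étale we get $\left(\pi'\right)^*\cT_{\cA_g}\cong\cT_{\cN_g}$, whence
$$\SpinM^*\cT_{\cN_g}\cong\SpinM^*\left(\pi'\right)^*\cT_{\cA_g}\cong\pi^*\tau^*\cT_{\cA_g}.$$

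First I would reduce, exactly as in the two preceding proofs, to the vanishing of $H^0$ on a general complete curve. Since $\SpinM^*\cT_{\cN_g}$ is locally free and $\cS_g$ is reduced, it suffices to exhibit, through a general point of $\cS_g$ and along a general tangent direction, a smooth projective curve $B$ with $H^0\bigl(B,\SpinM^*\cT_{\cN_g}|_B\bigr)=0$. Writing $S_g^0=\pi^{-1}\left(M_g^0\right)$ for the preimage of the locus of curves with trivial automorphism group, the covering of $M_g^0$ by complete curves pulls back, by taking connected components of preimages under the étale $\pi$, to a covering of $S_g^0$ by smooth projective curves, precisely as in the proof of Theorem \ref{thm:main2}.

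Next I would identify the restriction to such a $B$. The family over $B$ is the pullback of a family $f\colon\cC\to B$ of smooth curves, and since $\pi$ is étale the Hodge bundle $E=f_*\Omega^1_{\cC/B}$ and its Fujita decomposition coincide with those of the image family in $\cM_g$; the theta characteristic contributes only a relative square root of $\omega_{\cC/B}$, which does not enter $E$. Under the isomorphism above, $\SpinM^*\cT_{\cN_g}|_B\cong\tau^*\cT_{\cA_g}|_{\pi\left(B\right)}\cong\Sym^2E^\vee$. By Lemma \ref{lem:KU} a general choice of $B$ and tangent direction forces the unitary summand $\cU=0$ in Theorem \ref{Fujita}, so that $E=\cA$ is ample, and Lemma \ref{lem:no-sections} then yields $H^0\bigl(B,\Sym^2E^\vee\bigr)=0$, completing the argument.

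I expect no essential obstacle in this case, precisely because the Spin-Torelli morphism factors through $\tau$ via the two étale maps and thus inherits the positivity already established for the Torelli morphism; the only points requiring care are the étale compatibility of the tangent sheaves and the curve-covering of $S_g^0$, both routine given the degree-$2^{2g}$ étale structure of $\pi$ and $\pi'$. This is in sharp contrast with the Prym case, where the target is a \emph{lower}-dimensional abelian variety, no such factorization is available, and one must instead extract the ample quotient $\left(R^1f_*\cL\right)^\vee$ of the Hodge bundle of the double cover by hand.
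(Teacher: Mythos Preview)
Your proposal is correct and follows essentially the same route as the paper: both exploit the commutative square with \'etale vertical arrows to identify $\SpinM^*\cT_{\cN_g}\cong\pi^*\tau^*\cT_{\cA_g}$, pull back the complete curves from $M_g^0$ to $S_g^0$ via the \'etale $\pi$, observe that the Hodge bundle of the resulting family is the pullback of an ample bundle (hence ample), and conclude with Lemma~\ref{lem:no-sections}. Your closing comparison with the Prym case is an apt observation, though not part of the paper's proof.
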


\begin{proof}
As in the previous two proofs, it is enough to show that a general point of $\cS_g$ is contained in a projective curve $B\subseteq\cS_g$ such that $H^0\left(B,\SpinM^*\cT_{\cN_g}\right)=0$.

To this aim consider the natural commutative diagramm with étale vertical arrows
$$\xymatrix{
\cS_g \ar[rr]^{\SpinM} \ar[d]_{\pi} & & \cN_g \ar[d]^{\pi'} \\ \cM_g \ar[rr]^{\tau} & & \cA_g
}$$
Note that $\cT_{\cN_g}=\left(\pi'\right)^*\cT_{\cA_g}$ because $\pi'$ is étale, and thus $\SpinM^*\cT_{\cN_g}=\pi^*\tau^*\cT_{\cA_g}$.

By the proof of Theorem \ref{thm:main1}, the general point of $M_g^0$ is contained in a smooth projective curve $B'\subseteq M_g^0$ such that the Hodge bundle $E'=\left(f'\right)_*\Omega^1_{\cC'/B'}$ of the corresponding family of curves $f'\colon\cC'\to B'$ is ample. Any connected component $B\subseteq\pi^{-1}\left(B'\right)$ corresponds to a family of smooth projective curves $f\colon\cC\to B$ (with a family of theta characteristics), which is nothing but the pull-back of $f'$ by the étale Morphism $\varphi=\pi_{|B}\colon B'\to B$. In particular, the Hodge bundle of $f$
$$E:=f_*\Omega^1_{\cC/B}\cong\varphi^*E'$$
is also ample.

Thus a general point of $S_g^0\:=\pi^{-1}\left(M_g^0\right)$ is contained in a smooth projective curve $B\subseteq S_g^0$ such that $\left(\pi^*\tau^*\cA_g\right)_{|B}\cong \Sim^2E^\vee$ with ample $E$. Lemma \ref{lem:no-sections} implies that $H^0\left(B,\pi^*\tau^*\cA_g\right)=0$, as wanted.
\end{proof}

\begin{remark}[Super Torelli morphism] \label{rmk:Super-Torelli}
It is possible also to define a period map for the moduli space of Supersymmetric Riemann surfaces. Its target is again $\cN_g$. As explained in \cite{super}, this map is rational and factors trough a non-reduced classical stack $M$ ($=\mathfrak{M}^+_g/\Gamma$, in the notations of loc.cit.). The reduced stack underlying $M$ is the irreducible component $\cS_g^+$ of $\cS_g$ where the spin structure has an even number of sections. The restriction of the period map to $\cS_g^+$ is the Spin-Torelli map studied also in this note. We do not know if this generalization of the Spin-Torelli map is rigid.
\end{remark}

\section{Proof of Theorem \ref{thm:main4}}\label{sec:proof4}

Let $X$ be a closed orientable real surface of genus $g$. An unramified finite covering $p: X'\to X$ is called {\em characteristic} if it corresponds to a characteristic subgroup of the fundamental group $\pi_1(X),$ namely $\pi_1(X')$ as a subgroup of $\pi_1(X)$ must be left invariant by every element of $ \mbox{Aut}(\pi_1(X))$. Topologically, these are coverings such that every homeomorphism of $X$ lifts to a homeomorphism of $X'$ and the lifting process defines a homomorphism $L_p : \mbox{Aut}(\pi_1(X)) \to \mbox{Aut}(\pi_1(X'))$. 

The moduli $\mathcal{M}_i$ of curves of genus $i$ is realized as the quotient of the Teichmüller space $\mathcal{T}_i$ by the mapping class modular group $DM_i$. Any characteristic cover $p\colon X'\to X$ defines a map $\mathcal{T}_g\to \mathcal{T}_h$ (where $g=g(X)$ and $h=g(X')$). By using $L_p$, such a morphism descends to a morphism $p^*: \mathcal{M}_g\to \mathcal{M}_{h}$ (see \cite{BN97}*{III.1 and III.2} for more details).


The statement of Theorem \ref{thm:main4} follows now from Lemma \ref{lem:def} and the following theorem.

\begin{theorem}
When $g \geq 3$, it holds $H^0(\cM_g, p^*\cT_{\cM_h})=0$.
\end{theorem}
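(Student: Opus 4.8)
Following the established pattern of the three preceding proofs, the strategy is to reduce the vanishing $H^0(\cM_g, p^*\cT_{\cM_h})=0$ to a positivity statement on a general complete curve $B\subseteq \cM_g$. Since $p^*\cT_{\cM_h}$ is locally free and $\cM_g$ is reduced, it suffices to exhibit, through a general point of $M_g^0$ and tangent to a general direction $v$, a smooth projective curve $\pi\colon B\to \cM_g$ with $H^0(B, \pi^*p^*\cT_{\cM_h})=0$. The first step is therefore to describe the restriction of $p^*\cT_{\cM_h}$ to such a $B$ intrinsically. Given the family of curves $f\colon\cC\to B$ of genus $g$ corresponding to $B$, the characteristic cover $p$ induces, fibrewise over $B$, an unramified cover $C_b'\to C_b$ of genus $h$; these fit together into a family $f'\colon\cC'\to B$ of genus-$h$ curves, which is precisely the composition $p^*\circ\pi\colon B\to\cM_h$. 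Thus $\pi^*p^*\cT_{\cM_h}$ is the restriction to $B$ of the tangent bundle of $\cM_h$ pulled back along this family.

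\smallskip

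The second step is to identify this bundle in terms of Hodge data. At a point $[C']\in\cM_h$ the tangent space is $H^1(C',T_{C'})$, which is a quotient (not a symmetric square, unlike the $\cA$-valued cases) of $H^0(C',\omega_{C'})^{\vee}\otimes H^1(C',\cO_{C'})$; more precisely there is a natural surjection from $\Hom\bigl(H^0(C',\omega_{C'}),H^1(C',\cO_{C'})\bigr)$ onto $H^1(C',T_{C'})$ coming from the cup-product/Kodaira--Spencer description, with the Hodge bundle $E'=f'_*\omega_{\cC'/B}$ playing the role of $H^0(C',\omega_{C'})$. Consequently $\pi^*p^*\cT_{\cM_h}$ restricted to $B$ is a quotient of $(E')^{\vee}\otimes (E')^{\vee}$ (after using Serre duality $H^1(C',\cO_{C'})\cong H^0(C',\omega_{C'})^{\vee}$), hence a quotient of a tensor power of $(E')^{\vee}$. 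The key point is then that if $E'$ is ample, every quotient of $(E')^{\vee}\otimes(E')^{\vee}$ has no sections, by the same argument as in Lemma \ref{lem:no-sections}: dualizing a section yields a nonzero map from a subsheaf of $\cO_B$ into the ample bundle $E'\otimes E'$, which is impossible on degree grounds.

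\smallskip

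The third step is the positivity input: for general $B$ and $v$, the Hodge bundle $E'$ of the genus-$h$ family $\cC'\to B$ is ample. This is where the covering structure enters. Since $p\colon X'\to X$ is unramified, the genus-$h$ family $\cC'\to B$ is semistable whenever $\cC\to B$ is, and its Fujita decomposition $E'=\cA'\oplus\cU'$ applies by Theorem \ref{Fujita}. By Lemma \ref{lem:KU}, for a general complete curve through a general point with general tangent direction the flat part $\cU'$ vanishes, provided the Kodaira--Spencer map $\theta_b'$ of the genus-$h$ family has maximal rank for the general deformation. This is the main obstacle: one must verify that the ivhs of the \emph{covering} family is nondegenerate. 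The difficulty is that $\cC'\to B$ is not an arbitrary genus-$h$ family but is constrained to lie in the image of $p^*\colon\cM_g\to\cM_h$, so its deformations are governed by $H^1(C,T_C)$ rather than the full $H^1(C',T_{C'})$, and one cannot directly invoke \cite{LP16}*{Lemma 2.4} for a general point of $\cM_h$. The resolution is to analyze $\theta_b'$ pulled back from the base: a first-order deformation $\xi\in H^1(C,T_C)$ acts on $H^0(C',\omega_{C'})$ via its pullback $p^*\xi\in H^1(C',T_{C'})$, and one must show that for general $\xi$ the induced map $H^0(C',\omega_{C'})\to H^1(C',\cO_{C'})$ is injective. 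I would prove this by decomposing the Hodge cohomology of $C'$ into isotypic pieces for the deck-transformation action (the cover being characteristic, hence normal), reducing to showing that cup-product with $p^*\xi$ is injective on each eigen-summand, and invoking the maximal-rank statement for the base family together with the fact that $p^*$ is injective on holomorphic forms. Establishing this nondegeneracy of the ivhs along the covering direction is the crux of the argument; once it is in place, $\cU'=0$, the bundle $E'$ is ample, and Lemma \ref{lem:no-sections} (in its tensor-power form) yields the vanishing.
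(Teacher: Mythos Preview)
Your reduction to a complete curve $B\subseteq\cM_g$ and the construction of the covering family $f'\colon\cC'\to B$ are correct, and match the paper. The gap is exactly where you place ``the crux'': you need the Hodge bundle $E'=f'_*\omega_{\cC'/B}$ to be ample, and your sketch for this does not go through. Characteristic covers are Galois but in general non-abelian, so the isotypic summands of $H^0(C',\omega_{C'})$ under the deck group correspond to flat \emph{vector} bundles on $C$, not line bundles; the injectivity of cup-product with a general $\xi\in H^1(C,T_C)$ on such summands is not what \cite{LP16}*{Lemma 2.4} provides. The paper itself flags this as unresolved in the paragraph immediately after the proof: showing that a general first-order deformation of $C'$ \emph{compatible with $p$} satisfies the conclusion of \cite{LP16}*{Lemma 2.4} is precisely the obstacle that prevents the authors from treating the composition $\cM_g\to\cA_h$. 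A secondary issue is that your identification of $\pi^*p^*\cT_{\cM_h}$ as a quotient of $(E')^\vee\otimes(E')^\vee$ requires Max Noether's theorem for the fibres $C'_b$, which are special curves in $\cM_h$ and not a priori non-hyperelliptic.

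The paper avoids all of this by choosing a different positivity input. By Serre duality $T_{[C']}\cM_h=H^1(C',T_{C'})\cong H^0(C',\omega_{C'}^{\otimes2})^\vee$, so along $B$ one has directly
\[
p^*\cT_{\cM_h}\big|_B \;\cong\; \bigl(f'_*\omega_{\cC'/B}^{\otimes2}\bigr)^\vee,
\]
with no need to pass through $E'$. For any non-isotrivial family of curves of genus $\geq 2$, the bundle $f'_*\omega_{\cC'/B}^{\otimes2}$ is ample by Esnault--Viehweg \cite{EV90}*{Theorem 3.1}; this requires nothing about the infinitesimal variation of Hodge structure and hence no analogue of Lemma~\ref{lem:KU}. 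Lemma~\ref{lem:no-sections} then gives $H^0(B,p^*\cT_{\cM_h})=0$, and the proof concludes as in the earlier cases.
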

\begin{proof}
    We use the same strategy as in the previous proofs. A smooth projective curve $B\subseteq\cM_g$ corresponds to a non-isotrivial family $\pi\colon\cC\to B$ of smooth projective curves of genus $g$, and the morphism $\tau_p\colon\cM_g\to\cM_h$ produces a non-isotrivial family $\pi'\colon\cC'\to B$ of curves of genus $h$ such that over $b\in B$ there is covering $C'_b\to C_b$ induced by $p$. The fibre of $p^*\cT_{\cM_h}$ over $b$ is $H^1\left(C'_b,T_{C'_b}\right)$ (the tangent space to $\cM_h$ at $C'_b=\tau_p\left(C_b\right)$), and hence
    $$p^*\cT_{\cM_h\mid B}=R^1\pi'_* T_{\cC'/B}\cong\left(\pi'_*\omega_{\cC'/B}^{\otimes 2}\right)^{\vee}.$$
    By \cite{EV90}*{Theorem 3.1}, the bundle $\pi'_*\omega_{\cC'/B}^{\otimes 2}$ is ample on $B$, hence by Lemma \ref{lem:no-sections} we have $H^0\left(B,p^*\cT_{\cM_h\mid B}\right)=0$. The proof finishes as in the previous cases, by noticing that $\cM_g$ can be covered by such smooth projective curves $B$ and $p^*\cT_{\cM_h}$ is torsion-free, so that $H^0\left(B,p^*\cT_{\cM_h}\right)=0$ for a general $B$ implies $H^0\left(\cM_g,p^*\cT_{\cM_h}\right)=0$ as wanted.
\end{proof}

Composing the above studied map $ p^*\colon\cM_g\to\cM_h$, with the Torelli map $\cM_h \to \cA_h$ we obtain a morphism $\cM_g\to \cA_h$. Our methods do not apply immediately to the study of its rigidity, one needs a more sophisticated understanding of the inclusion $p^*\left(\cM_g\right)\subseteq\cM_h$ and its tangent spaces. More precisely, to prove the relevant generalization of Lemma \ref{lem:KU}, given a étale covering $\pi\colon C_h\to C_g$, it would be necessary to understand if a general first order deformation of $C_h$ \emph{compatible with $\pi$} also satisfies the properties of \cite{LP16}*{Lemma 2.4}.

Given a possibly covering $\pi\colon C_h\to C_g$ as above, one could also consider the generalized Prym variety $\Pic^0\left(C_h\right)/\pi^*\Pic^0\left(C_g\right)$ (which inherits a polarization of a certain type $\delta$ depending on the topological type of $\pi$) and thus construct a generalized Prym morphism $\cM_g\to\cA_{h-g}^{\delta}$ from the moduli stack of curves to that of $(h-g)$-dimensional abelian varieties with polarization of type $\delta$. The study of its rigidity presents the same difficulties of that of $\cM_g\to\cA_h$ introduced above.

\section{Remarks about rigidity of coarse morphisms}\label{sec:coarse}
Given an (infinitesimally) rigid morphism of stacks $F\colon \mathcal{X} \to \mathcal{Y}$, one can ask if the corresponding morphism of coarse spaces $f\colon X \to Y$ is also (infinitesimally) rigid. The answer in this generality is negative, as the following example shows.
\begin{example}\label{ex:nonrigid}
Let $G$ be the group $\mathbb{Z}_2$, $\mathcal{X}=BG$ be the quotient stack of a point by $G$, and $\mathcal{Y}$ the quotient stack of the affine line by the action of $G$ which maps $x$ to $-x$. There is a unique morphism $F\colon\mathcal{X}\to\mathcal{Y}$, which maps $BG$ to the fixed point of the action and is infinitesimally rigid. However, the corresponding map of coarse spaces does deform. (To make contact with the forthcoming Proposition \ref{prop:stackVScoarse}, note that in this case $F^{-1}(U)$ is the empty set.)
\end{example}

It is natural to wonder if the coarse version of the modular morphisms considered in this paper are rigid. Concerning infinitesimal rigidity, taking into account the criterion given in Lemma \ref{lem:def}, we can ask under which conditions $H^0\left(\cX,F^*T_{\cY}\right)=0$ implies $H^0\left(X,f^*T_Y\right)=0$. 

The following definition will be useful. Let $V$ be a sheaf on a variety $X$, and $T(V)$ the torsion subsheaf of $V$. The inclusion $T(V)\hookrightarrow V$ induces an inclusion $i\colon H^0(X,T(V))\hookrightarrow H^0(X,V)$. We say that a global section of $V$ is a torsion section if it is in the image of $i$. We have the following partial result.

\begin{proposition}\label{prop:stackVScoarse}
Let $F\colon \mathcal{X} \to \mathcal{Y}$ be a morphism of smooth Deligne-Mumford stacks such that $H^0\left(\cX,F^*T_{\cY}\right)=0$. Let $p\colon \mathcal{X}\to X$ and $q\colon \mathcal{Y} \to Y$ be the maps to coarse spaces and $f\colon X \to Y$ the morphism induced by $F$. Let $U\subseteq\mathcal{Y}$ be the open subset where $q$ is \'etale, and assume that $F^{-1}(U)\subseteq\mathcal{X}$ is a big open subset (i.e. its complement has codimension at least $2$). Then all sections $H^0\left(X,f^*T_{Y}\right)$ are torsion sections. 
\end{proposition}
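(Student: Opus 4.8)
The plan is to transport the vanishing on the stack to the coarse space by comparing the relevant sheaves on the locus $U$ where the coarse-space map is étale, and then to use the bigness hypothesis to extend the resulting vanishing across the whole coarse space up to torsion. First I would set notation: write $\mathcal{V}=F^*\cT_{\cY}$ for the locally free sheaf on $\cX$, and let $f^*\cT_Y$ be its analogue on the coarse space $X$. The key geometric input is that on the open set $\mathcal{X}'=F^{-1}(U)$ the map $q$ is étale onto its image, so over $\mathcal{X}'$ the coarse tangent sheaf $f^*\cT_Y$ and the stacky tangent sheaf $\mathcal{V}$ agree after pulling back along $p$. More precisely, on $q(U)\subseteq Y$ the sheaf $\cT_Y$ pulls back under the étale map to $\cT_{\cY}|_U$, and the induced comparison over $X':=p(\mathcal{X}')$ identifies $p^*(f^*\cT_Y)|_{\mathcal{X}'}$ with $\mathcal{V}|_{\mathcal{X}'}$.

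Next I would use the adjunction $p_*\cO_{\mathcal X}=\cO_X$ together with the projection formula to compare global sections: a section of $f^*\cT_Y$ over $X'$ pulls back to a section of $\mathcal{V}$ over $\mathcal{X}'$, and conversely, since $p$ realizes $X$ as the coarse space, $H^0(X',f^*\cT_Y)$ injects into $H^0(\mathcal{X}',\mathcal{V})$. Because $\mathcal{X}'$ is big in $\mathcal{X}$ and $\mathcal{V}$ is locally free (hence reflexive, hence its sections satisfy Hartogs' extension across the codimension-$\geq 2$ complement), restriction gives an isomorphism $H^0(\mathcal{X},\mathcal{V})\xrightarrow{\ \sim\ }H^0(\mathcal{X}',\mathcal{V})$. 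Combining this with the hypothesis $H^0(\mathcal{X},\mathcal{V})=0$ yields $H^0(\mathcal{X}',\mathcal{V})=0$, and hence $H^0(X',f^*\cT_Y)=0$ as well.

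It then remains to pass from vanishing on the big open $X'\subseteq X$ to the statement about torsion sections on all of $X$. Here I would argue that the restriction map $H^0(X,f^*\cT_Y)\to H^0(X',f^*\cT_Y)$ has kernel exactly the sections supported on the complement $X\setminus X'$; since any section vanishing on a dense open of an integral (or reduced) scheme must be a torsion section of the sheaf, and the complement $X\setminus X'$ has positive codimension, every element of $H^0(X,f^*\cT_Y)$ that dies on $X'$ lies in the image of $H^0(X,T(f^*\cT_Y))$. As $H^0(X',f^*\cT_Y)=0$, \emph{every} global section of $f^*\cT_Y$ restricts to zero on $X'$, and is therefore a torsion section, which is exactly the assertion.

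The main obstacle I anticipate is the clean identification of the two tangent sheaves over $X'$ and the control of the coarse-space map $p$ there: although $q$ is étale over $U$, the map $p\colon\mathcal X\to X$ need not be étale over all of $X'$ (it is only a coarse-space map, generically an isomorphism in the generic-stabilizer-free case but possibly ramified at stacky points), so care is needed to ensure that $f^*\cT_Y$ on $X'$ really is pulled back from, and compatible with, $\mathcal{V}$ on $\mathcal{X}'$ rather than merely isomorphic on an even smaller locus. This is precisely why the hypothesis is phrased via $F^{-1}(U)$ being big: it guarantees that the locus where $q$ is étale is large enough in $\cX$ for the Hartogs extension to apply, and it is the verification that the comparison of sheaves holds on this whole big open—and not just generically—that will require the most attention.
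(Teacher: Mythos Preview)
Your proposal is correct and follows essentially the same route as the paper's proof: identify $p^*f^*\cT_Y$ with $F^*\cT_{\cY}$ over $F^{-1}(U)$ using étaleness of $q$, then combine the good-moduli-space identity $\mathcal{F}\cong p_*p^*\mathcal{F}$ with Hartogs for the locally free sheaf $F^*\cT_{\cY}$ on the big open $F^{-1}(U)$. The paper organizes the argument slightly differently---it uses the global isomorphism $H^0(X,f^*\cT_Y)\cong H^0(\mathcal{X},p^*f^*\cT_Y)$ first and then argues by contradiction entirely on $\mathcal{X}$, rather than passing through the intermediate open $X'=p(F^{-1}(U))$---but the content is the same. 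Your worry in the final paragraph is unnecessary: the identification $p^*(f^*\cT_Y)|_{\mathcal{X}'}\cong \mathcal{V}|_{\mathcal{X}'}$ requires only the étaleness of $q$ on $U$ together with the commutativity $q\circ F=f\circ p$, and does not depend on any étaleness of $p$.
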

\begin{proof}
Over a field of characteristic zero, coarse moduli spaces of DM stacks are good, hence the adjunction morphism $f^*\cT_Y\to p_*p^*f^*\cT_Y$ is an isomorphism, and thus $H^0\left(X,f^*\cT_Y\right)=H^0\left(X,p_*p^*f^*\cT_Y\right)=H^0\left(\mathcal{X},p^*f^*\cT_Y\right)$. We have $p^*f^*\cT_Y=F^*q^*\cT_Y$. On $U$, we have $q^*\cT_Y=\cT_\mathcal{Y}$. Assume by contradiction that there exists an element $s$ of $H^0(\mathcal{X},p^*f^*\cT_Y)$ which is not torsion. Since it is not torsion, its restriction to $F^{-1}(U)$ is not zero, hence $H^0(F^{-1}(U),p^*f^*\cT_Y|_{F^{-1}(U)})\neq 0$. Then $H^0(F^{-1}(U), F^*\cT_{\cY}|_{F^{-1}(U)})\neq 0$.

As $F^{-1}(U)$ is big in $\mathcal{X}$ and $F^*\cT_{\mathcal{Y}}$ is locally free, we obtain that $H^0(\mathcal{X},F^*\cT_{\mathcal{Y}})\neq  0$, contradicting the hypothesis.
\end{proof}

Let us check whether the hypotheses of Proposition \ref{prop:stackVScoarse} are satisfied in our cases. On the one hand, the map from the moduli stack of ppav (and its variants discussed in this paper) to its coarse moduli space is \'{e}tale over the closed points with automorphism group exactly $\{\pm 1\}$. The preimage of this open set via the various Torelli, Spin and Prym maps is the open set of curves without automorphisms, which is big in the moduli space of smooth curves and its variants when $g\geq 4$, see e.g. \cite{Hac08}*{Lemma 5.3}. 

On the other hand, characteristic covers are Galois, so the image of $p^*$ lies in the locus of genus $h$ curves with non-trivial automorphisms. This means that Proposition \ref{prop:stackVScoarse} cannot be applied to the morphisms induced by characteristic covers.

Note that torsion sections of $f^*\cT_Y$ might exist even in simple cases, as the following example shows, so we cannot exclude that the Torelli morphism has infinitesimal deformations.

\begin{example}
Let $Y\subseteq\mathbb{C}^3$ be a quadratic cone, which has a normal singularity at the vertex. Take as $X$ a line trough the vertex, and $f$ the inclusion. Then $f^*\cT_Y$ is a rank two sheaf on $X\cong\mathbb{C}$ with torsion at the origin (
a direct computation reveals that the torsion subsheaf is a skyscraper sheaf with two-dimensional fiber).
\end{example}

Unfortunately, we do not know of any systematic study of infinitesimal deformations coming from torsion sections. Let us pose the following general question in deformation theory, whose study goes beyond the scope of this paper.
\begin{question}\label{q}
Let $f\colon X \to Y$ be a morphism of normal varieties, if all global sections of $f^*\cT_Y$ are torsion, is $f$ locally rigid?
\end{question}
A variant of the phenomenon encountered here is studied in \cite{AC}*{Section 6}; in the spirit of loc. cit., we might speculate that the answer to Question \ref{q} is positive.


\bibliographystyle{alpha}

\end{document}